\documentclass[11pt,reqno]{amsart}
\usepackage{fullpage}

\usepackage{amsfonts,amssymb,amsmath}
\usepackage{mathrsfs}
\usepackage{dsfont}
\usepackage{mathtools}
\usepackage{amsthm}
\usepackage{mathtools}
\usepackage{leftindex,tensor}
\usepackage[OT1]{fontenc}
\usepackage{MnSymbol}
\usepackage{manfnt}
\usepackage[all,cmtip]{xy}
\usepackage{braket}

\usepackage{tikz-cd}
\tikzset{%
   symbol/.style={%
        draw=none,
        every to/.append style={%
            edge node={node [sloped, allow upside down, auto=false]{$#1$}}}
    }
}

\usepackage[pagebackref,hyperindex,linktocpage=true]{hyperref}
\hypersetup{
    colorlinks,
    linkcolor={red!50!black},
    citecolor={blue!50!black},
    urlcolor={blue!80!black}
}

\usepackage{color}

\newtheorem{theo}{Theorem}[section]
\newtheorem{theo-def}{Theorem-Definition}[section]
\newtheorem{lem}[theo]{Lemma}
\newtheorem{lem-def}[theo]{Lemma-Definition}
\newtheorem{prop}[theo]{Proposition}
\newtheorem{cor}[theo]{Corollary}

\theoremstyle{definition}
\newtheorem{defi}[theo]{Definition}
\newtheorem{ex}[theo]{Example}
\newtheorem{rqe}[theo]{Remark}

\numberwithin{equation}{section}

\def\L{\mathcal{L}}

\def\E{\mathcal{E}}
\def\F{\mathcal{F}}

\def\L{\mathcal{L}}
\def\M{\mathcal{M}}
\def\O{\mathcal{O}}
\def\A{\mathcal{A}}

\def\ZZ{\mathbb{Z}}

\def\RR{\mathbb{R}}

\def\GG{\mathbb{G}}
\def\QQ{\mathbb{Q}}

\def\bfO{\mathbf{O}}

\def\ad{\mathrm{ad}}
\def\Ad{\mathrm{Ad}}

\def\la{\lambda}

\def\enh{\mathrm{enh}}
\def\Vin{\mathrm{Vin}}

\DeclareMathOperator{\Aut}{Aut}

\DeclareMathOperator{\GL}{GL}

\DeclareMathOperator{\Mon}{Mon}

\DeclareMathOperator{\Out}{Out}
\DeclareMathOperator{\Spec}{Spec}

\DeclareMathOperator{\Hom}{Hom}

\DeclareMathOperator{\Cone}{Cone}
\DeclareMathOperator{\Dyn}{Dyn}

\DeclareMathOperator{\Kil}{Kil}
\DeclareMathOperator{\Lie}{Lie}
\DeclareMathOperator{\RD}{RD}

\def\SheafCone{\mathop{\mathcal{C}\!\mathit{one}}\nolimits}
\def\SheafHom{\mathop{\mathcal{H}\!\mathit{om}}\nolimits}
\def\SheafIsom{\mathop{\mathcal{I}\!\mathit{som}}\nolimits}

\newcommand{\epic}{\twoheadrightarrow}
\newcommand{\into}{\hookrightarrow}

\title{Classification of reductive monoids}
\author{Jingren Chi}
\email{jrenchi@amss.ac.cn}
\address{Morningside Center of Mathematics and State Key Laboratory of Mathematical Sciences, Academy of Mathematics and Systems Science, Chinese
Academy of Sciences, Beijing 100190, China;}

\begin{document}
\begin{abstract}
    We classify reductive monoids over arbitrary base schemes by descent from the split classification in \cite{CJ}. For a reductive group scheme $G$, we construct its sheaf of based root data, abstract Cartan torus, abstract Weyl group, and a scheme of Weyl cones. The last of these represents the functor of isomorphism classes of rigidified reductive monoids with unit group $G$. As applications, we deduce a criterion for the existence of monoids that are not groups, prove an extension theorem over normal integral bases, and construct the Vinberg monoids without a quasi-split hypothesis.
\end{abstract}
\maketitle

\section{Introduction}
The goal of this article is to classify reductive monoids  over arbitrary base schemes in complete generality. In \cite{CJ} we have solved this problem when the unit group is split and the base scheme is connected. We build on this result and perform the reduction step here. In particular, this generalizes the works of Vinberg \cite{Vinberg1995}, Rittatore \cite{RittatoreMonoidArticle} and Renner \cite{Renner} over algebraically closed fields. We refer the reader to the introduction in \cite{CJ} for further discussion on the historical backgrounds. 

\subsection{Main results}
In \cite{CJ} split reductive monoids over connected schemes are classified in terms of weight monoids (see [\emph{loc. cit.}, Definition 2.11]) in the dominant weight cone. In this paper we will use certain combinatorial object called \emph{the Weyl cones}, which carry essentially the same information as the weight monoids. These are by definition the Weyl group stable rational convex polyhedral cones of full dimension in the $\QQ$-vector space generated by the weight lattice. The weight monoids are used in \emph{loc. cit.} mainly because they are more relevant to the explicit construction of the split reductive monoids. To perform the descent step and obtain a general classification theorem, we find it better to use the Weyl cones because they do not depend on choices of global pinnings (which are not available for non-quasi-split groups). The rough idea for the descent step is as follows. Given a reductive group scheme $G$ over a base scheme $S$. If we view it informally as a family of split reductive groups on $S$, we should use a family of Weyl cones over $S$ to classify the reductive monoids over $S$ with unit group $G$. This is formalized by introducing a sheaf on $S$ whose fiber at a geometric point $\Bar{s}\in S$ is the set of Weyl cones for the group $G_{\Bar{s}}$. We show that this sheaf is represented by a scheme that is \'etale locally constant over $S$. To achieve this, we sheafify the usual notion of based root data, and define the abstract Cartan and abstract Weyl group for $G$ in a canonical way, without any choices. Our main result is Theorem \ref{thm:general-classification}, which identifies rigidified reductive $G$-monoids with global sections of the scheme of Weyl cones. Forgetting the rigidification corresponds to taking orbits under the global section of the automorphism group $\Aut_G(S)$.\par 
We will derive the following applications of our main result. In Corollary \ref{cor:genuine-monoid} we show that there exists a reductive monoid over $S$ with unit group $G$ but not itself equal to $G$ if and only if the center admits a nonzero $S$-group homomorphism $\GG_m\to Z_G$. In Corollary \ref{cor:integral-normal-base} we show that over a normal integral base scheme, every rigidified reductive monoid over the generic point extends uniquely (up to rigidified isomorphisms) once its unit group extends to a reductive group scheme. In \S\ref{sec:Vinberg-monoid}, we construct the Vinberg monoid for any reductive group scheme, quasi-split or not, generalizing the case of split groups in \cite[\S5.2,\S5.3]{CJ}.
\subsection{Plan of the article}
Section \S\ref{sec:review-red-group} recalls the needed facts about reductive group schemes. Section \S\ref{sec:split-monoid} reformulates the split classification over connected bases \cite{CJ} in terms of Weyl cones. In Section \S\ref{sec:RD-Cone-scheme} we construct the canonical sheaves associated with a reductive group scheme, extend the split classification to disconnected bases, and prove independence of the pinning. Section \S\ref{sec:general-classification} first treats the case of a field and then proves the general classification theorem and its applications. Section \S\ref{sec:example} discusses basic examples of the multiplicative monoids of central simple algebras and the Vinberg monoids.
\subsection{General notations and conventions}
All categories are assumed to be small. A \emph{thin category} is the same as a \emph{preordered set}, and is canonically equivalent to a poset (partially ordered set). For an abelian group $X$ we put $X_\QQ\coloneqq X\otimes_\ZZ\QQ$. For any set $E$ and any scheme $S$, we let $\underline{E}_S$ denote the constant scheme over $S$ with fiber $E$, i.e. ``$E$ copies of $S$". \par 
For a group scheme $H$ over a scheme $S$, we let $\Lie(H)$ denote its Lie algebra, which is a sheaf of $\O_S$-modules on $S$. We let $X^*(H)\coloneqq\Hom_{S\text{-gp}}(H,\GG_m)$ denote the global character group of $H$ and let $X_*(H)\coloneqq\Hom_{S\text{-gp}}(\GG_m,H)$ be the group of global cocharacters. For any sub-torus $T\subset H$ and any $\alpha\in X^*(T)$, we let $\Lie(H)_\alpha\subset\Lie(H)$ denote the largest sub-$\O_S$-module on which the adjoint $T$-action factors through the character $\alpha$.  

\subsection{Related work}
The statement of our main result Theorem \ref{thm:general-classification} can be viewed as a direct generalization of Renner's classification of reductive monoids over algebraically closed fields (see \cite[Theorem 5.4]{Renner}). Our previous work \cite{CJ} that establish the special case of split monoids over connected base generalizes Vinberg's approach \cite{Vinberg1995} to the classification problem over algebraically closed field of characteristic $0$. As a comparison, the ``dual" approach of Rittatore \cite{RittatoreMonoidArticle}, classifies reductive monoids over any algebraically closed fields in terms of cones in the coweight space (as opposed to the weight space in our approach), builds on the theory of spherical embeddings of Luna-Vust \cite{Luna-Vust} and Knop \cite{Knop-LunaVust}. This latter theory has been generalized by Huruguen \cite{Huruguen} and Wedhorn \cite{Wedhorn} to arbitrary fields. It would be an interesting question whether one can have a classification result for spherical embeddings over general base schemes. 

\subsection*{Acknowledgment}
I am grateful to Simon Jacques for collaborations on previous projects that led to the current work. I thank Kazuhiro Ito for helpful conversations. The work leading to the initial draft of this paper was done by the author (a human). During the revision process ChatGPT 6 Astra was used for proofreading and text improvements. Most notably, the AI tool streamlined the formulation and proof of Lemma \ref{lem:Weyl-cone}, suggested the terminology in Definition \ref{def:canonical-generator-Weyl-cone} and found an imprecise point in the literature (see the proof of Lemma \ref{lem:pinning}). The author has been supported by National Key R\&D Program of China No.2023YFA1009701, National Natural Science Foundation of China (Grant No.12288201, No.12231001), CAS Project for Young Scientists in Basic Research (Grant No.YSBR-033).

\section{Review of reductive group schemes}\label{sec:review-red-group}
In this section, we review some basic concepts about reductive group schemes. All unspecified citations refer to \cite{SGA3-3new}.
\subsection{Root data}
A \emph{root datum} is a quadruple $(X,X^\vee,R,R^\vee)$ where $X$ (resp. $X^\vee\coloneqq\Hom_\ZZ(X,\ZZ)$) is the weight (resp. coweight) lattice (both are finite free $\ZZ$-modules), $R\subset X$ (resp. $R^\vee\subset X^\vee$) is the finite set of roots (resp. coroots) with a bijection $R\xrightarrow{\sim}R^\vee$ satisfying the axioms in [XXI, 1.1.1]. 
Let $\RD$ be the set of isomorphism classes of \emph{reduced} root data.\par
A \emph{based root datum} consists of a root datum $(X,X^\vee,R,R^\vee)$ together with a choice of a set of simple roots $\Delta\subset R$ and a set of simple coroots $\Delta^\vee\subset R^\vee$ that are in bijection under the root-coroot correspondence $R\xrightarrow{\sim}R^\vee$. Since the simple roots $\Delta$ and coroots $\Delta^\vee$, together with their pairing, recover the root datum, we will denote a based root datum simply by $\Psi=(X,X^\vee,\Delta,\Delta^\vee)$. 

\subsection{Splittings, types and Killing pairs}\label{sec:group-splitting}
Let $S$ be a nonempty base scheme. Following [XIX, 2.7], a \emph{reductive group scheme} over $S$ is an affine smooth group scheme over $S$ with connected reductive geometric fibers.\par 
Let $G$ be a reductive group scheme over $S$. We recall the following objects associated to $G$ (which may or may not exist).
\subsubsection{Splittings} A \emph{splitting} for $G$ consists of the data $(T,X,R)$ where $T\subset G$ is a maximal torus (over $S$), $X$ is a free abelian group of finite rank equipped with an isomorphism of $S$-group schemes $\Spec(\O_S[X])\cong T$, where $\O_S[X]$ is the sheaf of group algebras of $X$ with coefficients in $\O_S$, and $R\subset X$ is a root system of $T$ in $G$ such that the root line bundle $\mathrm{Lie}(G)_\alpha$ is trivial for all $\alpha\in R$, see [XXII, 1.13]. Any splitting gives rise to a reduced root datum $(X,X^\vee,R,R^\vee)$. If a splitting exists, the group $G$ is said to be split (over $S$). From [XXII, 2.3] we know that for any $s\in S$ there exists an \'etale neighborhood $U$ of $s$ such that $G_{U}$ is split. In particular reductive groups over algebraically closed fields are always split. 
\begin{rqe}
    To give an isomorphism of $S$-group schemes $\Spec(\O_S[X])\cong T$ is equivalent to giving an isomorphism of $S$-group schemes $\underline{X}_S\cong\SheafHom_S(T,\GG_m)$. When discussing splittings, we will choose one of these equivalent formulations, whichever is more convenient in the context.
\end{rqe}

\subsubsection{Type map} Following [XXII, 2.6-2.8], we define a ``type" map $S\to\RD$ that sends a point $s$ to the \emph{type} of $G_s$, i.e. the isomorphism class of the root datum associated to any splitting of the geometric fiber $G_{\Bar{s}}$, where $\Bar{s}$ is any geometric point of $S$ over $s$. This map is locally constant in the Zariski topology. Let $\RD(G)$ be its image. Then we have a disjoint union decomposition 
    \[S=\bigsqcup_{t\in\RD(G)} S_t\]
    where each $S_t$ is the nonempty open and closed subset of $S$ on which the type map has value $t\in\RD(G)$. 
\subsubsection{``Split" model} We let $G_0$ be the reductive group scheme over $S$ such that for each $t\in\RD(G)$, the restriction $G_0|_{S_t}$ is a split reductive group scheme over $S_t$ with root datum $t$. We may and will fix a maximal torus $T_0\subset G_0$ such that $T_0|_{S_t}$ is part of a splitting of $G_0|_{S_t}$ for each $t\in\RD(G)$. If $S$ is connected, then $G_0$ is split but in general $G_0$ may not be split since the types on different connected components of $S$ may be different. 
\subsubsection{Killing pairs}\label{sec:Killing-pair} A \emph{Killing pair} for $G$ is a pair $(T,B)$ where $T\subset G$ is a maximal torus and $B\subset G$ is a Borel subgroup such that $T\subset B$, see [XXII, 5.3.13]. The set of all Killing pairs for $G$ has a natural structure of a setoid\footnote{By a \emph{setoid}, we mean a (small) category in which all morphisms are isomorphisms and any Hom set has cardinality at most one; or equivalently, a groupoid in which all objects have no nontrivial automorphisms.}, denoted by $\Kil(G)$, in which morphisms from an object $(T,B)$ are given by conjugation by elements in $G(S)/T(S)$. A Killing pair $(T,B)$ for $G$ is \emph{split} if the torus $T$ is split. Let $\Kil^{\mathrm{sp}}(G)\subset\Kil(G)$ be the sub-setoid of split Killing pairs. Since splittings exist \'etale locally, so do split Killing pairs. In other words, there exists a surjective \'etale morphism $S'\to S$ such that $\Kil^{\mathrm{sp}}(G_{S'})\ne\varnothing$.

\subsection{Pinnings}\label{sec:pinning}
Recall from [XXIII, D\'efinition 1.1] that a \emph{pinning} of $G$ is a quintuple 
\[\E=(T,X,R,\Delta,\{e_\alpha\}_{\alpha\in\Delta})\] 
where
\begin{itemize}
    \item $(T,X,R)$ is a splitting of $G$;
    \item $\Delta\subset R$ is a system of simple roots;
    \item For each $\alpha\in\Delta$, $e_\alpha\colon\O_S\xrightarrow{\sim}\mathrm{Lie}(G)_\alpha$ is a trivialization of the root line bundle of $\alpha$.
\end{itemize}
\begin{rqe}
    The set of simple roots $\Delta$ determines a Borel subgroup $B\subset G$ such that $(T,B)$ forms a split Killing pair of $G$. But conversely a split Killing pair may not be extended to a pinning since the root line bundles $\mathrm{Lie}(G)_\alpha$ may not be trivial. However, one can pass to a Zariski open cover to trivialize the root line bundles and obtain a pinning. Thus any reductive group scheme admits a pinning \'etale locally on the base. Also, when $S$ is disconnected not all Borel subgroups containing $T$ arise from a set of simple roots $\Delta\subset R$, see \cite[Remark 6.1.2]{ConradReductiveGroupScheme}. So this notion of pinning is more restrictive than what one often encounters in the literature.
\end{rqe}
Any pinning $\E$ gives rise to a reduced based root datum $\Psi=\Psi_\E=(X,X^\vee,\Delta,\Delta^\vee)$, and we say that $\E$ is a \emph{$\Psi$-pinning}. We have a natural action of the automorphism group $\Aut_G(S)$ on the set of $\Psi$-pinnings: an element $\gamma\in\Aut_G(S)$ sends $\E$ to the $\Psi$-pinning
\[\gamma(\E)=(\gamma(T),X,R,\Delta,\{d\gamma(e_{\alpha})\}_{\alpha\in\Delta})\]
in which the underlying splitting has isomorphism $\Spec(\O_S[X])\cong\gamma(T)$ defined as the composition of the isomorphism $\Spec(\O_S[X])\cong T$ from the splitting of $\E$ and the isomorphism $\gamma$. \par 
On the other hand, let $\underline{\Aut(\Psi)}_S$ be the constant group scheme on $S$ associated to the automorphism group of $\Psi$. Then we have a free left action of the group of global sections $\Gamma(S,\underline{\Aut(\Psi)}_S)$, which consists of locally constant $\Aut(\Psi)$-valued functions on $S$, on the set of $\Psi$-pinnings of $G$ defined as follows. An element $v\in\Gamma(S,\underline{\Aut(\Psi)}_S)$ takes the $\Psi$-pinning $\E$ as above to the $\Psi$-pinning 
\[v(\E)=(T,X,R,\Delta,\{e_\alpha'\}_{\alpha\in\Delta})\]
whose underlying splitting has isomorphism $j'=j\circ v^{-1}\colon \underline{X}_S\cong\SheafHom_S(T,\GG_m)$ where $j$ is the isomorphism in the splitting for $\E$, and $e_\alpha'=e_{v^{-1}(\alpha)}$ on every open set where $v$ is constant.
\begin{lem}\label{lem:pinning}
    Let $G$ be a reductive group scheme over $S$ and let $\Psi$ be a reduced based root datum. Suppose there exists a $\Psi$-pinning for $G$. Then the group $G_{\ad}(S)\times\Gamma(S,\underline{\Aut(\Psi)}_S)$ acts simply transitively on the set of $\Psi$-pinnings of $G$ in the way described above. More precisely, let $\E,\E'$ be two $\Psi$-pinnings of $G$. Let $T$ (resp. $T'$) be the maximal torus from $\E$ (resp. $\E'$). Then there exists a unique pair $(u,v)\in G_\ad(S)\times\Gamma(S,\underline{\Aut(\Psi)}_S)$ such that 
    \begin{enumerate}
        \item We have $\ad(u)(T)=T'$ and the following diagram is commutative
        \[\xymatrix{
            \underline{X}_S\ar[r]^{j_{\E'}\qquad}_{\sim\qquad}\ar[d]_{v^{-1}} & \SheafHom_S(T',\GG_m)\ar[d]^{\ad(u)^*}\\
            \underline{X}_S\ar[r]^{j_{\E}\qquad}_{\sim\qquad} & \SheafHom_S(T,\GG_m)
        }\]
        where $j_{\E'}$ (resp. $j_\E$) is the isomorphism in the splitting underlying $\E'$ (resp. $\E$).
        \item On open subsets of $S$ where $v$ is constant, the adjoint action $\ad(u)$ on the Lie algebra sends the set $\{e_\alpha\}_{\alpha\in\Delta}$ for $\E$ to the set $\{e_\alpha'\}_{\alpha\in\Delta}$ for $\E'$. 
    \end{enumerate}
\end{lem}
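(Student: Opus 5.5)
We plan to reduce the statement to the classical result [XXIV, 1.5] (or [XXIII, 2.x]): for two pinnings of $G$ with the \emph{same} based root datum and the \emph{same} identification of $X$ (i.e.\ such that the combinatorial data match), there is a unique $u\in G_\ad(S)$ carrying one to the other. The new feature here is that $\E$ and $\E'$ are only both $\Psi$-pinnings. The identifications $j_\E,j_{\E'}$ may therefore differ by an automorphism of $\Psi$, and this automorphism may vary over the connected components of $S$. The plan is to first construct $v$ locally, then $u$, and then glue using uniqueness.

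\textbf{Step 1: reduction to the local case.} Uniqueness implies that everything descends along Zariski (even fpqc) covers, since $G_\ad$ is a sheaf and $\underline{\Aut(\Psi)}_S$ is a sheaf. So it suffices to prove existence and uniqueness \'etale locally on $S$, and then glue. We may thus assume $S$ is connected (or even strictly henselian local) and $T'=gTg^{-1}$ for some $g$. By conjugacy of maximal tori together with Borels [XXII, 5.3.13 / 5.5.x], \'etale locally there is $g\in G(S)$ with $\ad(g)(T,B_\Delta)=(T',B'_{\Delta})$, where $B_\Delta$ is the Borel subgroup determined by $\Delta$ in $\E$ and $B'_\Delta$ the one determined by $\Delta$ in $\E'$.

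\textbf{Step 2: construction of $v$.} The map $\ad(g)^*$ together with $j_\E,j_{\E'}$ induces an automorphism $w$ of $X$. Since conjugation carries roots to roots, simple roots with respect to $B'$ to simple roots with respect to $B$, and coroots to coroots, $w$ is an automorphism of $\Psi$. Setting $v=w^{-1}$ makes diagram (1) commute for $u=\bar g$. Over a non-connected $S$, $v$ is only locally constant, which is exactly why it must live in $\Gamma(S,\underline{\Aut(\Psi)}_S)$. The ambiguity in $g$ is $N_G(T)\cap N_G(B)=T$, which does not change $w$. So $v$ is well defined, and we get uniqueness of $v$ as soon as (1) holds, because $\ad(u)(T)=T'$ together with matching Borels forces $u\in gT_\ad$.

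\textbf{Step 3: adjusting $u$ within $T_\ad$ to match the $e_\alpha$.} This is the main obstacle. With $v$ fixed, replace $\E$ by $v(\E)$, so that the problem becomes: two pinnings with identical combinatorial data, conjugate by $\ad(g)$. Then $\ad(g)(e_\alpha)$ and $e'_\alpha$ both trivialize $\Lie(G)_{\alpha}$ for the $T'$-root $\alpha$, so $\ad(g)(e_\alpha)=c_\alpha e'_\alpha$ with $c_\alpha\in\O_S(S)^\times$. Since $\Delta$ is a basis of the character group of $T_\ad$ ($G_\ad$ is adjoint, so simple roots form a basis of $X^*(T_\ad)$), there is a unique $t\in T_\ad(S)$ with $\alpha(t)=c_\alpha^{-1}$. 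Hence $u=\bar g t$ works, and it is unique because the stabilizer in $T_\ad$ of all the $e_\alpha$ is trivial. We must take care to use $G_\ad(S)$ rather than the image of $G(S)$: $u$ is obtained by gluing local $\bar g t$, so it is only a section of $G_\ad$. The point flagged in the acknowledgment concerns exactly this, together with the fact that over disconnected $S$ the identification of simple roots must be done componentwise. We also need to check freeness of the combined action, i.e.\ that if $(u,v)$ fixes $\E$ then $v=1$ and $u=1$; this follows from the uniqueness just proved applied to $\E'=\E$.
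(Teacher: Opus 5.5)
Your proof is correct, but it follows a different route from the paper. The paper does not reprove the lemma. It cites [XXIV, Lemme 1.5] and only explains why that statement must be corrected: the automorphism of $\Psi$ has to be a locally constant function rather than an element of $\Aut(\Psi)$. It illustrates this with $\GG_m$ over $\Spec(k)\sqcup\Spec(k)$. There $G_\ad$ is trivial, and the pinning that is the identity on one point and inversion on the other cannot be reached from the identity pinning by a constant $v$.

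You instead reconstruct the argument directly:
\begin{itemize}
\item \'etale-local conjugacy of $(T,B_\Delta)$ and $(T',B'_\Delta)$ by some $g$, unique up to $T$;
\item the observation that $j_\E^{-1}\circ\ad(g)^*\circ j_{\E'}$ is an automorphism of the constant sheaf $\underline{X}_U$ preserving $\Delta$ and $\Delta^\vee$, hence a priori only a locally constant $\Aut(\Psi)$-valued function;
\item the correction of $\bar g$ by the unique $t\in T_\ad(U)$ with $\alpha(t)=c_\alpha^{-1}$, using that $\Delta$ is a basis of $X^*(T_\ad)$;
\item gluing by uniqueness.
\end{itemize}
Your route gives a self-contained proof. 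It shows exactly where local constancy of $v$ enters, and why $u$ lives only in $G_\ad(S)$ rather than in the image of $G(S)$. The paper's route is shorter, and its counterexample shows the correction is genuinely necessary. However, it depends on a reference that the paper itself calls imprecise.

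Three small points to tighten:
\begin{itemize}
\item In proving uniqueness of $v$, say explicitly that (1) alone already forces $\ad(u)(B_\Delta)=B'_\Delta$. This holds because $v^{-1}$ preserves $\Delta$, so $\ad(u)^*$ carries $j_{\E'}(\Delta)$ onto $j_\E(\Delta)$.
\item \'Etale localization does not in general make $S$ connected. Nothing in your Steps 2--3 uses connectedness, so that aside can simply be dropped.
\item The imprecision the paper flags in [XXIV, 1.5] is the constant versus locally constant $v$. It is not the distinction between $G_\ad(S)$ and $\ad(G(S))$.
\end{itemize}
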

\begin{proof}
    This is essentially a reformulation of a corrected version of [XXIV, Lemme 1.5]. This result as stated in \emph{loc. cit.} is not literally correct when the base scheme $S$ is disconnected. The issue is that in \emph{loc. cit.}, when translated to our formulation, the group $\Aut(\Psi)$ is used instead of $\Gamma(S,\underline{\Aut(\Psi)}_S)$. More precisely, an isomorphism of pinnings $\E\xrightarrow{\sim}\E'$ in \emph{loc. cit.} is required to be accompanied by an isomorphism of based root data $\Psi'\to\Psi$ that is constant on all connected components of $S$. But this requirement is too restrictive for the statement to be true. For example, consider the case where $S=\Spec(k)\sqcup\Spec(k)$ consists of two points ($k$ is an algebraically closed field) and $G=\GG_m$ on $S$. The based root datum is simply $\Psi=(\ZZ,\ZZ,\emptyset,\emptyset)$ and a $\Psi$-pinning for $G$ is the same as a splitting for $G$, i.e. an isomorphism of $S$-group schemes $\GG_m\cong\GG_m$. Consider the pinning $\E_1$ corresponding to the identity, and the pinning $\E_2$ corresponding to the isomorphism which is identity on one component and inverse map on the other component. The only inner automorphism of $G$ is the identity, and $\E_2$ corresponds to a locally constant family of automorphisms of $\Psi$. The result is corrected by allowing $v$ to be a locally constant $\Aut(\Psi)$-valued function on $S$ as in our formulation.
\end{proof}
Given a $\Psi$-pinning $\mathcal{E}$ of $G$ over $S$, we let $\Aut_G^\E$ be the subgroup scheme of $\Aut_G$ preserving the $\underline{\Aut(\Psi)}$-orbit of $\E$. More precisely, for any test scheme $S'\to S$, $\Aut_G^\E(S')$ is the subgroup of $\Aut_G(S')$ that stabilizes the $\Gamma(S',\underline{\Aut(\Psi)}_S)$-orbit of $\E_{S'}$ (the pinning of $G_{S'}$ induced by $\E$). Then we get isomorphisms of $S$-group schemes: 
\begin{equation}\label{eq:Out-isom}
    \underline{\Aut(\Psi)}_S\cong\Aut^{\mathcal{E}}_G\cong\Out_G
\end{equation}
where $\Out_G\coloneqq\Aut_G/G_{\ad}$ is the outer automorphism group scheme of $G$. The second isomorphism further induces an isomorphism of $S$-groups (see \cite[Theorem 7.1.9]{ConradReductiveGroupScheme}): 
\begin{equation}\label{eq:Aut-splitting}
    \Aut_G\cong G_\ad\rtimes\Out_G.
\end{equation}
In particular $\Out_G$ is a constant group scheme when $G$ is split and in this case it is finite if and only if the maximal central torus of $G$ has rank at most $1$. 

\section{Split reductive monoids over connected base schemes}\label{sec:split-monoid}
\subsection{Review of reductive monoids}
\begin{defi}\label{def:red-monoid}
    Let $S$ be a scheme. A \emph{reductive monoid} over $S$ is a monoid scheme $\M$ over $S$ such that the unit group $\M^\times$ is a reductive group scheme over $S$ and the structure morphism $\M\to S$ is affine, flat, of finite presentation, with integral and normal geometric fibers.\par 
    Let $G$ be a reductive group scheme over $S$. A \emph{reductive $G$-monoid} over $S$ is a reductive monoid $\M$ over $S$ whose unit group scheme $\M^\times$ is isomorphic to $G$. A \emph{rigidified reductive $G$-monoid over $S$} is a pair $(\M,\iota)$ where $\M$ is a reductive monoid scheme over $S$ and $\iota\colon G\xrightarrow{\sim}\M^\times$ is an isomorphism of group schemes over $S$.
\end{defi}

\begin{lem}\label{lem:inner-aut-extend}
    Let $\M$ be a reductive monoid with unit group $G$. The adjoint action of $G_\ad=G/Z_G$ on $G$ extends uniquely to $\M$. 
\end{lem}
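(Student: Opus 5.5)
Over the base $S$, $G_\ad=G/Z_G$ is an fppf quotient, so the plan is to first extend the conjugation action of $G$ itself to $\M$ and then show that $Z_G$ acts trivially. The $G$-action on $\M$ will be $(g,m)\mapsto gmg^{-1}$, built from the monoid multiplication $\M\times_S\M\to\M$ and inversion on $G=\M^\times\subset\M$. This is visibly a $G$-action on $\M$ restricting to conjugation on $G$.

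\textbf{Triviality of the center.} The morphism $Z_G\times_S\M\to\M$, $(z,m)\mapsto zmz^{-1}$, agrees with the projection $(z,m)\mapsto m$ on the open subscheme $Z_G\times_S G$, because $Z_G$ is central in $G$. I will show that $Z_G\times_S G$ is schematically dense in $Z_G\times_S\M$; since $\M$ is affine, hence separated, over $S$, the two morphisms then coincide.

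\textbf{Density.} For density I will use that $\M\to S$ is flat and finitely presented with integral geometric fibers. Each fiber $G_s$ is nonempty, hence dense in $\M_s$, and $Z_G\to S$ is flat. By the fiberwise criterion for schematic density of an open subscheme in a flat, finitely presented scheme over $S$ (EGA IV 11.10.10), $G\subset\M$ is universally schematically dense over $S$. Base-changing along $Z_G\to S$ gives the density of $Z_G\times_S G$ in $Z_G\times_S\M$. Since the action of $G$ is trivial on $Z_G$, it factors through the quotient $G\to G_\ad$: a morphism $G\times\M\to\M$ that is invariant under the $Z_G$-translation on the first factor descends along the fppf $Z_G$-torsor $G\times\M\to G_\ad\times\M$. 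The action axioms also descend, since $G_\ad\times G_\ad\times\M$ receives an fppf cover from $G\times G\times\M$.

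\textbf{Uniqueness.} Suppose two actions $a,a'\colon G_\ad\times_S\M\to\M$ both extend the adjoint action on $G$. They agree on the open subscheme $G_\ad\times_S G$, which is schematically dense in $G_\ad\times_S\M$ by the same universal density argument, base-changed along the flat $G_\ad\to S$. By separatedness, $a=a'$.

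\textbf{Main obstacle.} The main subtlety is the density step over a general base. It needs universal schematic density rather than topological density, and it relies on the flatness and finite presentation built into the definition of a reductive monoid. The rest is formal descent.
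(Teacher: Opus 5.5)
Your proposal is correct and takes essentially the same approach as the paper. In both, the universal schematic density of $G$ in $\M$ (relative to $S$) forces $Z_G$ to act trivially under the conjugation action $(g,m)\mapsto gmg^{-1}$, and also gives uniqueness; your fppf descent to $G_\ad$ just spells out a step the paper leaves implicit. The one difference is that the paper cites \cite[Lemma 2.1]{CJ} for the density, and implicitly for the openness of $G=\M^\times$ in $\M$, which you also assume without proof. You instead derive the density from the fiberwise criterion EGA IV 11.10.10.
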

\begin{proof}
    According to \cite[Lemma 2.1]{CJ}, the unit group $G$ is universally schematically dense in $\M$. Hence for any scheme $S'\to S$, any automorphism $\M_{S'}\to\M_{S'}$ is uniquely determined by its restriction to the unit group $G_{S'}$. In particular, under the adjoint action of $G$ on $\M$, any element in the center $Z_G(S')$ acts trivially and so the action factors through $G_\ad$. 
\end{proof}
\begin{defi}\label{def:monoid-category}
    Let $G$ be a reductive group scheme over a scheme $S$ and let $S'$ be an $S$-scheme.
    \begin{enumerate}
        \item[(i)] Define $\Mon^\bullet_G(S')$ to be the category whose objects are the rigidified reductive $G_{S'}$-monoids over $S'$, and a morphism between two objects $(\M,\iota)$ and $(\M',\iota')$ is a morphism of monoid schemes $f\colon\M\to\M'$ over $S'$ that commutes with the rigidifications $\iota,\iota'$ on unit groups. From \cite[Lemma 2.1(ii)]{CJ} we know that any such morphism is uniquely determined by its restriction to the unit groups. Therefore $\Mon^\bullet_G(S')$ is a thin category (i.e. a preordered set). 
        \item[(ii)] Define $\Mon^\circ_G(S')$ to be the poset of isomorphism classes of objects in $\Mon^\bullet_G(S')$. Then the natural map $\Mon^\bullet_G(S')\to\Mon^\circ_G(S')$ is an equivalence of thin categories.
        \item[(iii)] Define $\Mon_G(S')$ to be the set of isomorphism classes of reductive monoids over $S'$ whose unit groups are isomorphic to $G_{S'}$.
    \end{enumerate}
    The assignments $S'\mapsto\Mon^\bullet_G(S')$ and $S'\mapsto\Mon^\circ_G(S')$ define presheaves of pre-ordered sets on the category of schemes over $S$. 
    When $S'=\Spec(A)$ is an affine scheme, we also write 
    \[\Mon_G^\bullet(A)\coloneqq\Mon_G^\bullet(\Spec(A)),\qquad\Mon_G(A)\coloneqq\Mon_G(\Spec(A)).\]
\end{defi}
\begin{rqe}
    Let $\Aut_G$ be the automorphism scheme of $G$ over $S$. Then for any scheme $S'\to S$, there is a natural (left) action of $\Aut_G(S')$ on (the underlying set of) $\Mon^\bullet_G(S')$ by changing rigidifications: $\delta\in\Aut_G(S')$ acts by $(\M,\iota)\mapsto(\M,\iota\circ\delta^{-1})$. It is clear that $\Mon_G(S')$ is in bijection with the set of $\Aut_G(S')$-orbits on the set of isomorphism classes of $\Mon_G^\bullet(S')$. This action induces an action of $\Aut_G(S')$ on the set $\Mon^\circ_G(S')$, under which the subgroup $G_\ad(S')$ acts trivially. So $\Mon_G(S')$ is also in bijection with the set of $\Aut_G(S')$-orbits on $\Mon_G^\circ(S')$. 
\end{rqe}

\begin{defi}\label{def:Weyl-cone}
    Let $\Phi=(X,X^\vee,R,R^\vee)$ be a root datum and let $W$ be its Weyl group. A \emph{Weyl cone} in $X_\QQ\coloneqq X\otimes_\ZZ\QQ$ is a rational convex polyhedral cone $C\subset X_\QQ$ of full dimension that is stable under the action of $W$. Let $\Cone_{\mathrm{fd}}(X_\QQ)$ denote the poset of rational convex polyhedral cones of full dimension in $X_\QQ$, with the partial order given by \emph{reverse} inclusion of cones. Let $\Cone^W(X_\QQ)\subset\Cone_{\mathrm{fd}}(X_\QQ)$ denote the sub-poset of Weyl cones.\par 
    Dually, we let $\Cone_{\mathrm{pt}}(X_\QQ^\vee)$ denote the poset of pointed\footnote{Recall that a convex cone $C$ in a $\QQ$-vector space is \emph{pointed}, or \emph{strictly convex}, if $C\cap(-C)=\{0\}$.} rational convex polyhedral cones in $X^\vee_\QQ$, on which the partial order is given by inclusion of cones; and let $\Cone^W(X^\vee_\QQ)\subset\Cone_{\mathrm{pt}}(X_\QQ^\vee)$ denote the sub-poset of \emph{dual Weyl cones}, i.e. the pointed rational convex polyhedral cones that are $W$-stable. 
\end{defi}
\begin{rqe}
    Cone duality induces isomorphisms of posets:
    \[\Cone_{\mathrm{fd}}(X_\QQ)\cong\Cone_{\mathrm{pt}}(X_\QQ^\vee),\qquad \Cone^W(X_\QQ)\cong\Cone^W(X^\vee_\QQ).\]
    Our main results will be stated in terms of cones in $X_\QQ$. In \cite{Ngo-Hankel}, similar results (over a field) are formulated using dual cones in $X^\vee_\QQ$.
\end{rqe}
Using this duality we get the following explicit description of Weyl cones.
\begin{lem}\label{lem:Weyl-cone}
    Let $K(X^\vee,W)$ be the set of all finite subsets $\Omega\subset X^\vee$ satisfying the following conditions:
    \begin{enumerate}
        \item Any element $\xi\in\Omega$ is nonzero and primitive in $X^\vee$. In other words, $\xi$ cannot be written as $n\eta$ where $\eta\in X^\vee$ and $n>1$ is an integer.
        \item The rational convex polyhedral cone $\sigma_\Omega\subset X^\vee_\QQ$ generated by $\Omega$ is pointed.
        \item $\Omega$ is irredundant as a generating set of $\sigma_\Omega$: 
        \[\xi\notin\sum_{\eta\in\Omega\setminus\{\xi\}}\QQ_{\ge0}\eta,\qquad\forall\xi\in\Omega.\]
        \item $\Omega$ is $W$-stable: for all $w\in W$ we have $w(\Omega)=\Omega$. 
    \end{enumerate}
    Then the map $K(X^\vee,W)\to\Cone^W(X_\QQ)$ sending $\Omega\in K(X^\vee,W)$ to the dual cone $\sigma_\Omega^\vee\subset X_\QQ$ defined by
    \[\sigma_\Omega^\vee=\{x\in X_\QQ\mid \xi(x)\ge0,\;\forall\xi\in\Omega\}\]
    is a bijection. In particular, the empty set $\emptyset$ is an element of $K(X^\vee,W)$ and maps to the cone $X_\QQ\in\Cone^W(X_\QQ)$ (which is the minimal element under the reverse inclusion partial order). 
\end{lem}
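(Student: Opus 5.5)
The plan is to pass through the cone duality isomorphism $\Cone^W(X_\QQ)\cong\Cone^W(X^\vee_\QQ)$ of the preceding remark, so that it suffices to show that $\Omega\mapsto\sigma_\Omega$ is a bijection from $K(X^\vee,W)$ onto the set of pointed $W$-stable rational polyhedral cones in $X^\vee_\QQ$. Indeed $\sigma_\Omega^\vee$ is, by definition, the dual of $\sigma_\Omega$, and biduality $(\sigma^\vee)^\vee=\sigma$ for rational polyhedral cones gives the translation both ways. Under duality, full-dimensionality of $\sigma^\vee_\Omega$ corresponds to pointedness of $\sigma_\Omega$, which is condition (2). $W$-stability is also preserved, because $W$ acts on $X$ and $X^\vee$ compatibly with the pairing, so $w(\sigma^\vee)=(w\sigma)^\vee$.

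First I check that the map lands in the right place. For $\Omega\in K(X^\vee,W)$, the cone $\sigma_\Omega$ is rational polyhedral, since it is generated by finitely many lattice vectors. It is pointed by (2), and $W$-stable by (4). Hence $\sigma_\Omega^\vee$ is a full-dimensional $W$-stable rational polyhedral cone in $X_\QQ$.

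For the inverse, I use the standard structure theory of pointed cones. A pointed rational polyhedral cone $\sigma$ is generated by its extremal rays (its one-dimensional faces). Each such ray $\rho$ is rational, so $\rho\cap X^\vee$ contains a unique primitive generator $\xi_\rho$. I define $\Omega(\sigma)$ to be the set of these $\xi_\rho$; this is what the paper will call the canonical generators.
\begin{itemize}
\item Condition (1) holds by construction.
\item Condition (2) holds because $\sigma_{\Omega(\sigma)}=\sigma$, by the Minkowski–Krein–Milman theorem for pointed cones.
\item Condition (3) holds because an extremal ray cannot lie in the cone spanned by the other extremal rays. If $\xi=\sum\lambda_\eta\eta$ with $\eta\ne\xi$ and $\lambda_\eta\geq 0$, then extremality forces each $\eta$ with $\lambda_\eta>0$ to lie on $\QQ_{\ge0}\xi$. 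Both are primitive, so $\eta=\xi$, a contradiction (for $\xi\neq0$).
\item Condition (4) holds because $W$ acts by lattice automorphisms preserving $\sigma$. It therefore permutes faces, hence extremal rays, and it sends primitive vectors to primitive vectors.
\end{itemize}

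It remains to show $\Omega(\sigma_\Omega)=\Omega$ for $\Omega\in K(X^\vee,W)$. This is the main step, and the one I expect to need the most care. Every extremal ray of $\sigma_\Omega$ must be spanned by some element of $\Omega$: an extremal ray contained in a cone generated by finitely many vectors must contain one of them. Conversely, I must show that each $\xi\in\Omega$ spans an extremal ray, and this is where irredundancy (3) enters. Since $\sigma_\Omega$ is generated by the primitive generators of its extremal rays, which lie in $\Omega$, any $\xi\in\Omega$ not among them would be a nonnegative combination of the others, contradicting (3). Finally, primitivity (1) identifies each element of $\Omega$ with the canonical generator of its ray, and two distinct elements of $\Omega$ cannot span the same ray, again by (1). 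Pointedness is used implicitly to guarantee that extremal rays exist and generate the cone.

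The special case is immediate: $\Omega=\emptyset$ gives $\sigma_\emptyset=\{0\}$, whose dual is $X_\QQ$.
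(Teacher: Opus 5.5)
Your proposal is correct and takes essentially the same route as the paper. Well-definedness comes from (2) and (4) via cone duality, surjectivity from taking the primitive generators of the extremal rays of the dual cone $C^\vee$, and injectivity from (1) and (3); your check that $\Omega(\sigma_\Omega)=\Omega$ just writes out the injectivity step that the paper states in one line.
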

\begin{proof}
    The map is well-defined by conditions (2) and (4). Indeed, the condition that $\sigma_\Omega$ is pointed is equivalent to the condition that the dual cone $\sigma_\Omega^\vee$ spans $X_\QQ$. The map is injective by condition (1) and (3). It remains to prove surjectivity. Let $C\in\Cone^W(X_\QQ)$ be a Weyl cone. Then its dual cone $C^\vee\subset X^\vee_\QQ$ is a pointed rational convex polyhedral cone. Let $\Omega_C\subset C^\vee\cap X^\vee$ be the set of primitive elements in the extremal rays of $C^\vee$. Then it is clear that $\Omega_C$ satisfies conditions (1), (2), (3). Since $W$ preserves $C^\vee$ and the lattice $X^\vee$, it permutes the extremal rays of $C^\vee$ and their primitive lattice generators. Thus $\Omega_C$ is $W$-stable and we conclude that $\Omega_C\in K(X^\vee,W)$.
\end{proof}
\begin{defi}\label{def:canonical-generator-Weyl-cone}
    For each Weyl cone $C\in\Cone^W(X_\QQ)$, the set $\Omega\in K(X^\vee,W)$ with $\sigma_\Omega^\vee=C$ from Lemma \ref{lem:Weyl-cone} is called the \emph{set of canonical facet normals} of $C$. The \emph{sum of canonical facet normals of $C$} is the element
    \[\omega(C)\coloneqq\sum_{y\in\Omega}y\in (X^\vee)^W.\]
    It is $W$-invariant since $\Omega$ is $W$-stable. Therefore we obtain a map:
    \[\omega\colon\Cone^W(X_\QQ)\to (X^\vee)^W,\quad C\mapsto\omega(C)\]
    with the following properties (both are easily verified):
    \begin{itemize}
        \item $\omega(C)=0$ if and only if $C=X_\QQ$. 
        \item $\omega$ is equivariant under the natural action of $\Aut(\Psi)$ on both sides.
    \end{itemize}
\end{defi}

\subsection{Classification of split reductive monoids over connected base}\label{sec:split-classification}
Let $S$ be a nonempty and connected base scheme. Let $G$ be a reductive group over $S$ with a pinning $\E$ that gives rise to a splitting $(T,X,R)$ and let $W=N_G(T)/T$ be the Weyl group. Let $j_\E\colon \underline{X}_S\cong\SheafHom_S(T,\GG_m)$ be the isomorphism in the splitting data. Since $S$ is nonempty and connected, it induces an isomorphism that we still denote by $j_\E\colon X\cong X^*(T)$. For any $(\M,\iota)\in\Mon_G^\bullet(S)$, we let $\M_T$ be the schematic closure of $\iota(T)$ in $\M$. According to \cite[Theorem 2.19]{CJ}, the pair $(\M_T,\iota|_T)\in\Mon^\bullet_T(S)$ is a rigidified reductive $T$-monoid on $S$, and if we let $C_\E(\M,\iota)\subset X_\QQ$ be the cone generated by the characters $\chi\in X$ such that $j_\E(\chi)\in X^*(T)$ extends to a regular function on $\M_T$ along $\iota|_T$, then we have $C_\E(\M,\iota)\in\Cone^W(X_\QQ)$. 

\begin{theo}\label{thm:split-classification}
    Let the notations and assumptions be as above ($G$ has pinning $\E$ and the base scheme $S$ is nonempty and connected). 
    \begin{enumerate}
        \item The assignment $(\M,\iota)\mapsto C_\E(\M,\iota)$ defines an equivalence of thin categories
        \[C_\E\colon\;\Mon_G^\bullet(S)\simeq\Cone^W(X_\QQ).\]
        \item For any $\gamma\in\Aut_G(S)$ with image $o(\gamma)\in\Out_G(S)$, the following diagram is commutative
            \[\xymatrix{
            \Mon_G^\bullet(S)\ar[d]^{\gamma}\ar[r]^{C_\E} & \Cone^W(X_\QQ)\ar[d]^{o(\gamma)}\\
            \Mon_G^\bullet(S)\ar[r]^{C_\E} & \Cone^W(X_\QQ)
            }\]
            where the left arrow is defined by pre-composing a rigidification with $\gamma^{-1}$, and the right arrow is induced, via the isomorphism $j_\E\colon X\cong X^*(T)$, by the action of $o(\gamma)$ on $X^*(T)$ via inverse pullback (using the canonical isomorphism $\Out_G\cong\Aut_G^\E$). 
    \end{enumerate}
\end{theo}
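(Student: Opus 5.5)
Part (1) is a translation of the split classification of \cite{CJ}, which is stated in terms of weight monoids in the dominant cone rather than Weyl cones. I will use the pinning $\E$ to fix $B\supset T$ and the dominant chamber $X_{\QQ,+}$, and compose the equivalence of \cite{CJ} with a combinatorial equivalence between weight monoids and $\Cone^W(X_\QQ)$.

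\textbf{Step 1 (from \cite{CJ}).} By \cite[Theorem 2.19 and the main classification theorem]{CJ}, sending $(\M,\iota)$ to its weight monoid is an equivalence of $\Mon_G^\bullet(S)$ with a poset of saturated weight monoids. Concretely, this is the set of dominant $\chi$ such that $j_\E(\chi)$ occurs in $\O(\M)$, equivalently extends to $\M_T$. Saturation comes from normality of the fibers.

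\textbf{Step 2 (combinatorics).} The weight monoid equals $C\cap X\cap X_{\QQ,+}$, where $C=C_\E(\M,\iota)$ is the full $W$-saturated cone. Conversely, a Weyl cone $C$ is $W\cdot(C\cap X_{\QQ,+})$, so it is determined by this intersection. Full-dimensionality of $C$ corresponds to the \cite{CJ} condition that the monoid generates $X$ as a group. Since \cite[Definition 2.11]{CJ} is built from the normal monoid $\M_T$ of the torus, the plan is to show that the admissibility conditions there match exactly the $W$-stability of $C$. Thus the two posets coincide, with reverse inclusion of cones corresponding to the morphisms $\M\to\M'$ (more functions on $\M'$ means a smaller cone, since $\O(\M')\subset\O(\M)$). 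This shows $C_\E$ is order preserving and reflecting. It is then an equivalence, because both sides are thin categories and the map is bijective on isomorphism classes.

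\textbf{Step 3 (part (2)).} Write $\gamma=\ad(u)\circ\gamma_0$ using \eqref{eq:Aut-splitting}, with $u\in G_\ad(S)$ after possibly a Zariski cover. Alternatively, do it directly: $o(\gamma)$ is represented by $\gamma_0\in\Aut_G^\E(S)$, which preserves $T$, since $S$ is connected and $\Aut_G^\E(S)=\Aut(\Psi)$. For $\ad(u)$, Lemma~\ref{lem:inner-aut-extend} extends $\ad(u)$ to an automorphism of $\M$ intertwining $\iota$ and $\iota\circ\ad(u)^{-1}$. Hence these two rigidified monoids are isomorphic and give the same cone; this handles the inner part, and the same cone is what the diagram requires since $o(\ad(u))=1$. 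For $\gamma_0$, which preserves $T$, the closure of $\iota\circ\gamma_0^{-1}(T)=\iota(T)$ is unchanged. A character $\chi$ extends along $\iota\circ\gamma_0^{-1}|_T$ if and only if $\chi\circ\gamma_0$ extends along $\iota|_T$, that is, $\gamma_0^*\chi\in C$. The new cone is therefore $(\gamma_0^{*})^{-1}C=o(\gamma)\cdot C$, as required.

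\textbf{Main obstacle.} The main obstacle is Step 2: making precise that the conditions in \cite[Definition 2.11]{CJ} on weight monoids are equivalent to $W$-stability of the generated cone. I expect this to need the fact that the dominant part of a $W$-stable convex cone is governed by the inequalities $\langle\cdot,\alpha^\vee\rangle$ (closure under subtracting positive roots within the dominant chamber). I would prove it using the dual description of Lemma~\ref{lem:Weyl-cone}: a $W$-stable set of facet normals, intersected with the dominant chamber, gives the defining inequalities of the weight monoid.
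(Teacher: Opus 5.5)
Your proposal is correct and matches the paper's proof. For (1), the paper combines the weight-monoid classification \cite[Theorem 3.7]{CJ} with \cite[Corollary 2.14]{CJ}, which is exactly the weight-monoid/Weyl-cone dictionary you flag as the main obstacle. For (2), it uses the same factorization of $\gamma$ into an inner part (handled by Lemma \ref{lem:inner-aut-extend}) and an element of $\Aut_G^\E(S)$ preserving $T$, written as $\gamma=\delta\circ\ad(u)$.

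Two small fixes. No Zariski cover is needed, since \eqref{eq:Aut-splitting} already gives the factorization on $S$-points. In Step 2 your parenthetical should say that $\M'$ has \emph{fewer} functions, because $\O(\M')\subset\O(\M)$.
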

\begin{proof}
    (1) This follows from the combination of Theorem 3.7 and Corollary 2.14 in \cite{CJ}.\par 
    (2) Factorize the automorphism into $\gamma=\delta\circ\mathrm{\ad}(u)$ where $u\in G_{\ad}(S)$ and $\delta\in\Aut^\E_G(S)$. By Lemma \ref{lem:inner-aut-extend}, the inner automorphism $\ad(u)$ on $G$ extends (through the rigidification $\iota$) to an automorphism of $\M$. Then we have $\M_{\ad(u)(T)}=\ad(u)(\M_T)$ and so a character $j_\E(\chi)\in X^*(T)$, where $\chi\in X$, extends to a regular function on $\M_{\ad(u)^{-1}(T)}$ along $\iota\circ\ad(u)^{-1}|_T$ if and only if $j_\E(\chi)$ extends to a regular function on $\M_T$ along $\iota|_T$. Put $\iota'\coloneqq\iota\circ\ad(u)^{-1}$. Then we have shown that
    \[C_\E(\M,\iota')=C_\E(\M,\iota).\]
    On the other hand we have $\iota\circ\gamma^{-1}=\iota'\circ\delta^{-1}$ and $\delta$ preserves $T$. Hence we get
    \[C_\E(\M,\iota\circ\gamma^{-1})=(\delta^{-1})^*C_\E(\M,\iota')=(\delta^{-1})^*C_\E(\M,\iota).\]
    
    Finally we note that under the isomorphism $\Out_G\cong\Aut_G^\E$, the element $o(\gamma)$ is mapped to $\delta$ and so the right vertical arrow $o(\gamma)$ is induced by the composition of the following isomorphisms:
    \[X\xrightarrow{j_\E}X^*(T)\xrightarrow{(\delta^{-1})^*} X^*(T)\xrightarrow{j_\E^{-1}}X\]
    and this finishes the proof.
\end{proof}

\begin{cor}\label{cor:monoid-automorphism-field}
    Let $G$ be a split reductive group scheme over a field $k$ and let $\E$ be a pinning of $G$. For any rigidified reductive $G$-monoid $(\M,\iota)\in\Mon^\bullet_G(k)$ with Weyl cone $C=C_\E(\M,\iota)\subset X_\QQ$, the morphism of $k$-group schemes $\Aut_\M\to\Aut_G$ determined by the rigidification $\iota$ induces a canonical isomorphism
    \[\Aut_\M(k)\cong G_\ad(k)\rtimes\Out_{G}(k)_C\]
    where $\Out_{G}(k)_C$ denotes the subgroup of $\Out_G(k)$ that stabilizes the set $C$.
\end{cor}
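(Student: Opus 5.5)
The plan is to identify $\Aut_\M(k)$, via the restriction map, with the stabilizer in $\Aut_G(k)$ of the class of $(\M,\iota)$, and then compute that stabilizer with Theorem \ref{thm:split-classification}(2).

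\textbf{Step 1: $\Aut_\M(k)$ as a subgroup of $\Aut_G(k)$.} Since $G$ is schematically dense in $\M$ (\cite[Lemma 2.1]{CJ}), an automorphism of $\M$ is determined by its restriction to the unit group. So the map $\Aut_\M(k)\to\Aut_G(k)$, $\phi\mapsto\iota^{-1}\circ\phi|_{\M^\times}\circ\iota$, is injective.

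Its image consists exactly of those $\gamma\in\Aut_G(k)$ for which $(\M,\iota\circ\gamma^{-1})\cong(\M,\iota)$ in $\Mon^\bullet_G(k)$. Indeed, such an isomorphism is a monoid automorphism $f$ of $\M$ with $f\circ\iota\circ\gamma^{-1}=\iota$, i.e. $f|_{\M^\times}=\iota\gamma\iota^{-1}$. Conversely, any $\phi$ gives such an isomorphism. Hence $\Aut_\M(k)$ is the stabilizer of the isomorphism class of $(\M,\iota)$ under the action of $\Aut_G(k)$ on $\Mon^\circ_G(k)$.

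\textbf{Step 2: the stabilizer.} By Theorem \ref{thm:split-classification}(1), $C_\E$ is an equivalence of thin categories. It therefore induces a bijection from $\Mon^\circ_G(k)$ onto the set of Weyl cones. By part (2) of that theorem, $\gamma$ stabilizes the class of $(\M,\iota)$ if and only if $o(\gamma)(C)=C$.

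So $\Aut_\M(k)$ is identified with the preimage of $\Out_G(k)_C$ under $o\colon\Aut_G(k)\to\Out_G(k)$. Here $\Out_G(k)_C$ means the stabilizer under the action on $X$ via $j_\E$ and inverse pullback, which is the action used in Theorem \ref{thm:split-classification}(2). In particular $G_\ad(k)$ lies in this preimage, which is consistent with Lemma \ref{lem:inner-aut-extend}.

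\textbf{Step 3: the semidirect product.} Over the field $k$, the splitting \eqref{eq:Aut-splitting} on $k$-points reads $\Aut_G(k)=G_\ad(k)\rtimes\Aut^\E_G(k)$, with $\Aut^\E_G(k)\cong\Out_G(k)$. I would verify this directly: the sequence $1\to G_\ad(k)\to\Aut_G(k)\to\Out_G(k)$ is exact, and the section $\Out_G(k)\cong\Aut^\E_G(k)\subset\Aut_G(k)$ (a constant group, by \eqref{eq:Out-isom}) makes $o$ surjective on $k$-points.

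Taking the preimage of the subgroup $\Out_G(k)_C$ then yields $G_\ad(k)\rtimes\Out_G(k)_C$. Canonicity comes from the fact that the map $\Aut_\M\to\Aut_G$ is determined by $\iota$.

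\textbf{Expected obstacle.} The main subtlety is the identification in Step 1. One must check that monoid automorphisms of $\M$ are exactly the isomorphisms $(\M,\iota\circ\gamma^{-1})\to(\M,\iota)$ in the rigidified category, and that the sign and inverse conventions match between the left action $(\M,\iota)\mapsto(\M,\iota\circ\gamma^{-1})$ and the action $o(\gamma)$ on cones. Since we only need the stabilizer, a mismatch between $\gamma$ and $\gamma^{-1}$ is harmless. The surjectivity of $G_\ad(k)\to\ldots$, on the other hand, is not needed, because $G_\ad(k)$ enters as a group of $k$-points directly.
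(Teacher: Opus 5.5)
Your proof is correct and follows essentially the same route as the paper. Both arguments use the pinning-induced splitting $\Aut_G(k)=G_\ad(k)\rtimes\Aut^\E_G(k)$ together with Theorem \ref{thm:split-classification} to decide which automorphisms of $G$ extend to $\M$.

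The only difference is packaging. You treat inner and pinned automorphisms uniformly, as the stabilizer of the class of $(\M,\iota)$, via part (2) of that theorem. The paper instead handles $G_\ad$ separately by Lemma \ref{lem:inner-aut-extend}. For $\theta\in\Aut^\E_G(k)$, it argues directly that an extension preserves $\M_T$ and hence $C$, and for the converse it uses part (1) to compare $(\M,\iota)$ with $(\M,\iota\circ\theta)$.
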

\begin{proof}
    Recall that the pinning $\E$ determines the isomorphism \eqref{eq:Aut-splitting}. By Lemma \ref{lem:inner-aut-extend} we have $G_\ad\subset\Aut_\M$. Let $\theta\in\Out_G(k)$ and suppose that $\theta\in\Aut_\M(k)$. Since $\theta(T)=T$, we have $\theta(\M_T)=\M_T$ and therefore $\theta\in\Out_{G}(k)_C$.
    Conversely if $\theta(C)=C$, then the two objects $(\M,\iota)$ and $(\M,\iota\circ\theta)$ in $\Mon_G^\bullet(k)$ are isomorphic by Theorem \ref{thm:split-classification}. This means that $\theta$ extends to an automorphism of $\M$ (as monoid scheme over $k$). 
\end{proof}
This result will be generalized in Corollary \ref{cor:Aut(M)}.
\section{Scheme of based root data and Weyl cones}\label{sec:RD-Cone-scheme}
To obtain a classification theorem for general reductive monoids from Theorem \ref{thm:split-classification}, there are several issues that one needs to resolve: 
\begin{enumerate}
    \item Extend the split classification to the case of disconnected base schemes;
    \item Examine the effect of changing pinnings;
    \item Descend from the split case to get the general result.
\end{enumerate}
To achieve these, we are forced to formulate everything in terms of \emph{canonical} objects associated to a reductive group scheme, that are free of any specific choices. In this section we define these canonical objects and resolve problems (1) and (2) above. 

\subsection{Absolute based root data}\label{sec:absolute-RD}
As a warm up, we first recall the construction in the case of fields.\par 
Let $k$ be a field, $k_s$ be a separable closure of $k$ and $\Gamma\coloneqq\mathrm{Gal}(k_s/k)$ be the absolute Galois group. Let $G$ be a reductive group over $k$. We can define a canonical based root datum from $G$, following \cite[Remark 7.1.2]{ConradReductiveGroupScheme}.
\begin{defi}\label{def:absolute-RD}
    The \emph{absolute based root datum} of $G$, denoted $\Psi=\Psi_G=(X,X^\vee,\Delta,\Delta^\vee)$, is defined as the colimit over all (split) Killing pairs $(T,B)$ of $G_{k_s}$ with $T\subset B$, of the based root datum $(X^*(T),X_*(T),\Delta_B,\Delta_B^\vee)$ where $\Delta_B$ is the set of simple roots determined by $B$ and $\Delta_B^\vee$ is the corresponding set of simple coroots. The transition maps in this system are bijections induced by conjugation by elements in $G_{k_s}$.
\end{defi}
Note that $\Psi$ only depends on the choice of the separable closure $k_s$. Let $\Aut(\Psi)$ be the automorphism group of the based root data $\Psi$. Then there is a canonical group homomorphism 
\begin{equation}\label{eq:rho-G}
    \rho_G\colon\;\Gamma\to\Aut(\Psi).
\end{equation}
Let $W=W_\Psi$ be the Weyl group of $\Psi$. Then $\rho_G$ induces an action of $\Gamma$ on $W$ and any (split) Killing pair $(T_{k_s}, B_{k_s})$ of $G_{k_s}$ determines an isomorphism $W\cong N_{G_{k_s}}(T_{k_s})/T_{k_s}$.

\subsection{Schemes of based root data, abstract Cartan and Weyl groups}
Let $S$ be a nonempty scheme and let $G$ be a reductive group scheme over $S$. 
\begin{defi}\label{def:sheaf-weight-lattice}
    Let $\underline{X}^G$ (resp. $\underline{X}^{G,\vee}$) be the sheaf of abelian groups on the big \'etale site of $S$ associated to the presheaf that sends an $S$-scheme $S'$ to the colimits of abelian groups
    \[\varinjlim_{(T,B)\in\Kil^{\mathrm{sp}}(G_{S'})}X^*(T),\qquad\left(\text{resp. }\quad \varinjlim_{(T,B)\in\Kil^{\mathrm{sp}}(G_{S'})}X_*(T)\right)\]
    in which the transition maps are given by
    \[\Ad(g^{-1})^*\colon X^*(T)\xrightarrow{\sim}X^*(T'),\qquad\left(\text{resp. }\quad \Ad(g)_*\colon X_*(T)\xrightarrow{\sim}X_*(T')\right)\]
    for any coset in $G(S')/T(S')$ represented by an element $g\in G(S')$ and any split Killing pair $(T,B),(T',B')\in\Kil^{\mathrm{sp}}(G_{S'})$ with $\Ad(g)(T,B)=(T',B')$. The map is independent of the choice of representative since $T(S')$ acts trivially.\par 
    Applying the functor $\text{-}\otimes_\ZZ\QQ$ to each term in the above colimit, we get \'etale sheaves of $\QQ$-vector spaces $\underline{X}^G_\QQ$ and $\underline{X}^{G,\vee}_\QQ$ on $S$.
\end{defi}
\begin{rqe}
    Over an algebraically closed field, one can also use a limit in the above definition since the setoid of Killing pairs is connected and all transition maps in the system are isomorphisms (leading to a canonical identification between limit and colimit). The reason to use colimit instead of limit in general is that sheafification preserves colimits (since it is a left adjoint).
\end{rqe}

\begin{lem}\label{lem:Killing-pair-compatibility}
    Let $S'$ be a scheme over $S$ such that $\Kil^{\mathrm{sp}}(G_{S'})$ is nonempty. Then any split Killing pair $(T,B)\in\Kil^{\mathrm{sp}}(G_{S'})$ determines canonical isomorphisms of sheaves of abelian groups on $S'$:
    \[\iota_{(T,B)}\colon\SheafHom_{S'}(T,\GG_m)\cong\underline{X}^G|_{S'},\quad\iota_{(T,B)}^\vee\colon\SheafHom_{S'}(\GG_m,T)\cong\underline{X}^{G,\vee}|_{S'}.\]
    Moreover, for any $\gamma\in G_\ad(S')$, let $(T',B')\coloneqq\ad(\gamma)(T,B)$, then the following diagram commutes:
    \[\xymatrix{
        \SheafHom_{S'}(T',\GG_m)\ar[r]^{\qquad\iota_{(T',B')}}\ar[d]_{\ad(\gamma)^*} & \underline{X}^G|_{S'}\ar[d]^{=}\\
        \SheafHom_{S'}(T,\GG_m)\ar[r]^{\qquad\iota_{(T,B)}} & \underline{X}^G|_{S'}
    }\]
\end{lem}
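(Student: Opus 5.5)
We construct $\iota_{(T,B)}$ at the level of presheaves and then pass to sheafifications. For any $S'$-scheme $S''$, the base change $(T_{S''},B_{S''})$ is a split Killing pair of $G_{S''}$. Hence there is a canonical coprojection
\[X^*(T_{S''})\to\varinjlim_{(T_1,B_1)\in\Kil^{\mathrm{sp}}(G_{S''})}X^*(T_1).\]
These maps are compatible with pullback along $S'''\to S''$. So they assemble into a morphism from the presheaf $S''\mapsto X^*(T_{S''})=\Hom_{S''\text{-gp}}(T_{S''},\GG_m)$ to the presheaf defining $\underline{X}^G$. The source presheaf is already the étale sheaf $\SheafHom_{S'}(T,\GG_m)$, since $T$ is split and characters satisfy fpqc descent. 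Sheafifying therefore gives a morphism $\iota_{(T,B)}\colon\SheafHom_{S'}(T,\GG_m)\to\underline{X}^G|_{S'}$. The coweight version $\iota^\vee_{(T,B)}$ is constructed the same way.

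To show $\iota_{(T,B)}$ is an isomorphism, we argue that the indexing setoid is locally connected. First, it is a setoid by definition, so every transition map is an isomorphism. Whenever $\Kil^{\mathrm{sp}}(G_{S''})$ is connected, i.e. any two split Killing pairs are $G(S'')$-conjugate, the colimit coprojection from any single object is therefore a bijection. We do not need connectedness over $S''$ itself, only after an étale cover. Given two split Killing pairs $(T_1,B_1),(T_2,B_2)$ over $S''$, the transporter scheme $\mathrm{Transp}_G((T_1,B_1),(T_2,B_2))$ is a $T_1$-torsor over $S''$, by the conjugacy of Killing pairs [XXII, 5.6.5]. It is smooth, so it admits sections étale locally. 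Thus étale locally on $S''$ the two pairs are conjugate by some $g$, and $g$ is unique modulo $T_1$.

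Next, we must check that each element of the colimit, represented by some $\chi_1\in X^*(T_1)$, is étale locally in the image of $X^*(T)$. Its preimage is $\Ad(g)^*\chi_1$, taken on a cover where $g$ conjugates $(T,B)$ to $(T_1,B_1)$. Injectivity is local in the same way. Hence the morphism of presheaves becomes an isomorphism after sheafification. The main obstacle is making this "locally connected setoid $\Rightarrow$ sheafified colimit equals any single term" argument precise. I would phrase it as a sheaf-level statement: the morphism induces isomorphisms on stalks at geometric points, or equivalently is locally surjective and locally injective. I would invoke the transporter torsor rather than relying on a global conjugacy statement, which fails over disconnected or non-strictly-local bases.

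For the compatibility diagram, let $\gamma\in G_\ad(S')$ and set $(T',B')=\ad(\gamma)(T,B)$. After passing to an étale cover of $S'$, $\gamma$ lifts to some $g\in G$, because $G\to G_\ad$ is a smooth surjection (its kernel $Z_G$ is of multiplicative type). Since both sides of the diagram are morphisms of sheaves, commutativity may be checked on that cover. There $\ad(\gamma)=\Ad(g)$, and by definition the transition map $\Ad(g^{-1})^*\colon X^*(T)\to X^*(T')$ identifies the two coprojections into the colimit. Writing this out, $\iota_{(T',B')}(\chi')=\iota_{(T,B)}(\Ad(g)^*\chi')$, which is exactly the commutativity of the square. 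Independence of the chosen lift follows because $T(S')$ acts trivially on $X^*(T)$, or equivalently because the result is checked locally and is independent of the lift. The coweight statement is obtained dually, using the transition maps $\Ad(g)_*$.
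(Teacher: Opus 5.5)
Your construction of $\iota_{(T,B)}$ and your proof that it is an isomorphism are the paper's argument, spelled out in more detail: split Killing pairs are conjugate \'etale locally, so the sheafified colimit is locally just the characters of one torus. The transporter-torsor formulation is a good way to make this precise. One minor point: if $\Kil^{\mathrm{sp}}(G_{S''})$ is disconnected, the colimit is a direct sum over components. A general element is then a finite sum of images from several $X^*(T_i)$, not a single $\chi_1$. Additivity makes this harmless.

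The step that fails is in the compatibility part. You claim that $\gamma\in G_\ad(S')$ lifts to $G$ after an \'etale cover because $G\to G_\ad$ is smooth, its kernel $Z_G$ being of multiplicative type. But multiplicative type does not imply smooth. The map $G\to G_\ad$ is a $Z_G$-torsor, and it is smooth only when $Z_G$ is smooth.

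For a counterexample, take $G=\mathrm{SL}_p$ over a separably closed field $k$ of characteristic $p$, and $t\in k^\times$ not a $p$-th power. The class of $\mathrm{diag}(t,1,\dots,1)$ in $\mathrm{PGL}_p(k)=\GL_p(k)/k^\times$ has no preimage in $\mathrm{SL}_p(k)$, since that would require $\lambda^p t=1$. Every \'etale cover of $\Spec k$ has a section, so it does not lift \'etale locally either.

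There are two easy repairs.
\begin{itemize}
\item[(a)] Lift $\gamma$ only fppf locally, which is always possible. By the isomorphism already proved, $\underline{X}^G|_{S'}$ is representable, hence fppf-separated. The presheaf defining it is defined on all $S$-schemes, so your transition-map identity can be checked on an fppf cover.
\item[(b)] Do not lift $\gamma$ at all. The transporter in $G$ from $(T,B)$ to $(T',B')$ is a smooth $T$-torsor, so \'etale locally there is $g\in G$ with $\Ad(g)(T,B)=(T',B')$. With $\bar g$ the image of $g$ in $G_\ad$, the element $\bar g^{-1}\gamma$ stabilizes $(T,B)$. It therefore lies in $N_{B_\ad}(T_\ad)=T_\ad=T/Z_G$, which acts trivially on $T$. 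Hence $\ad(\gamma)|_T=\Ad(g)|_T$, and your computation $\iota_{(T',B')}(\chi')=\iota_{(T,B)}(\Ad(g)^*\chi')$ goes through \'etale locally.
\end{itemize}
The paper's own proof does not address this point; it only says the maps are ``compatible with the actions of $G_\ad(S')$''.
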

\begin{proof}
    The existence of these maps follows by definition. To check the isomorphism and compatibility we can work \'etale locally on every test scheme $U\to S'$. 
    Since any two Killing pairs become conjugate \'etale locally, the colimit identifies locally with the sheaf of characters and cocharacters of the chosen $T$. This shows that the maps are isomorphisms and compatible with the actions of $G_{\ad}(S')$.
\end{proof}

In particular any pinning $\E$ of $G_{S'}$ (if exists), with associated splitting $(T_\E,X_\E,R_\E)$, determines isomorphisms
\[\iota_\E\colon\underline{X_\E}_{S'}\xrightarrow{\sim}\underline{X}^G|_{S'},\quad\iota_\E^\vee\colon\underline{X_\E^\vee}_{S'}\xrightarrow{\sim}\underline{X}^{G,\vee}|_{S'}.\]
For the following result, let us recall the Dynkin scheme $\underline{\mathrm{Dyn}}_G$ from \cite[XXIV.3.3]{SGA3-3new}. Roughly speaking, it is obtained by \'etale descent from the constant scheme associated to the set of simple roots (or coroots) on an \'etale cover on which pinnings exist.
\begin{prop}\label{prop:dynkin-scheme-embedding}
    The sheaves $\underline{X}^G$ and $\underline{X}^{G,\vee}$ are represented by quasi-isotrivial ``constant tordu"\footnote{see [\emph{loc. cit.}, X, D\'efinition 5.1]} group schemes over $S$. Moreover, there exist closed embeddings of schemes $\underline{\Dyn}_G\into\underline{X}^G$ and $\underline{\Dyn}_G\into\underline{X}^{G,\vee}$ such that for any $S$-scheme $S'$ and any pinning $\E$ of $G_{S'}$ (if one exists), with associated splitting $(T_\E,X_\E,R_\E)$ and simple roots (resp. coroots) $\Delta_\E$ (resp. $\Delta_\E^\vee$), the compositions
    \[\underline{\Dyn}_{G,S'}\into\underline{X}^G|_{S'}\xrightarrow{\iota_\E^{-1}}\underline{X_\E}_{S'},\quad (\text{resp.  }\underline{\Dyn}_{G,S'}\into\underline{X}^{G,\vee}|_{S'}\xrightarrow{(\iota_\E^\vee)^{-1}}\underline{X^\vee_\E}_{S'})\]
    have image $\underline{\Delta_\E}_{S'}$ (resp. $\underline{\Delta_\E^\vee}_{S'}$). 
\end{prop}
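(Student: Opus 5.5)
We prove representability by étale descent, and get the embeddings of the Dynkin scheme by comparing the two descent data. First choose a surjective étale morphism $S'\to S$ such that $G_{S'}$ admits a pinning $\E$ (possible by the remark in \S\ref{sec:pinning}). Refining $S'$ if necessary, we may assume that on each connected component the based root datum is constant. By Lemma~\ref{lem:Killing-pair-compatibility}, the split Killing pair $(T_\E,B_\E)$ gives isomorphisms $\iota_\E\colon \underline{X_\E}_{S'}\cong \underline{X}^G|_{S'}$ and $\iota_\E^\vee\colon \underline{X_\E^\vee}_{S'}\cong\underline{X}^{G,\vee}|_{S'}$. So $\underline{X}^G|_{S'}$ is represented by a constant group scheme. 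Since $\underline{X}^G$ is already an fpqc-type étale sheaf (by construction), it is the étale descent of this constant scheme along $S'\to S$, with descent datum over $S''=S'\times_S S'$ given by $\iota_{\E_2}^{-1}\circ\iota_{\E_1}$, where $\E_1,\E_2$ are the two pullbacks of $\E$.

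The key step is to identify this gluing automorphism. Apply Lemma~\ref{lem:pinning} over $S''$ to the two $\Psi$-pinnings $\E_1,\E_2$. This gives a unique $(u,v)\in G_\ad(S'')\times\Gamma(S'',\underline{\Aut(\Psi)})$ with $\ad(u)$ carrying $\E_1$ to $v(\E_2)$ in the sense of the diagram there. By the compatibility square in Lemma~\ref{lem:Killing-pair-compatibility}, the inner part $\ad(u)$ induces the identity on $\underline{X}^G$. Hence $\iota_{\E_2}^{-1}\circ\iota_{\E_1}$ is exactly the locally constant automorphism $v$ of $\underline{X_\E}_{S''}$. (When $\Psi$ varies between components, one works component by component.) The descent datum is therefore a locally constant section of $\underline{\Aut(\Psi)}$, which is a group-scheme automorphism of a constant group scheme. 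Effectivity of descent for such (affine-over-base, locally constant) objects along étale covers is given in [X, 5]: the descended object is quasi-isotrivial and ``constant tordu'' by definition. The same argument applies to $\underline{X}^{G,\vee}$.

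For the Dynkin scheme, recall that $\underline{\Dyn}_G$ is defined in [XXIV.3.3] as the descent of $\underline{\Delta_\E}_{S'}$ along the same cover, with gluing given by the outer part $v$ of the comparison of pinnings. The $v$ preserves $\Delta$ and $\Delta^\vee$. So the closed subscheme $\underline{\Delta_\E}_{S'}\subset\underline{X_\E}_{S'}$ (open and closed, being a union of components of a constant scheme) is stable under the descent datum. It therefore descends to a closed subscheme of $\underline{X}^G$, and this subscheme is identified with $\underline{\Dyn}_G$ because the descent data match. For an arbitrary $S'$ with a pinning $\E'$, we check the image statement after passing to a common étale refinement of $S'$ and the chosen cover. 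There the two pinnings differ by some $(u,v)$ as above, $\ad(u)$ acts trivially, and $v$ preserves $\Delta$. So the image is $\underline{\Delta_{\E'}}$; this is a local statement, so it descends.

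I expect the main obstacle to be the descent-datum identification: showing that $\ad(u)$ really acts trivially through the colimit and that the cocycle condition holds. That needs the uniqueness in Lemma~\ref{lem:pinning} on the triple product, together with careful bookkeeping of Killing pairs versus pinnings over disconnected bases. I would first verify the cocycle condition by uniqueness of $(u,v)$ over $S'\times_S S'\times_S S'$.
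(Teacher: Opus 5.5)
Your proposal is correct and follows essentially the paper's route. You pass to an \'etale cover on which $G$ admits a pinning, so that $\underline{X}^G$ and $\underline{X}^{G,\vee}$ become constant, and invoke SGA3 X, 5.5 for representability. You then get the Dynkin embeddings because the subsheaf $\iota_\E(\underline{\Delta_\E}_{S'})$ does not depend on the pinning; your computation $\iota_{\E'}=\iota_\E\circ v^{-1}$ via Lemmas~\ref{lem:pinning} and~\ref{lem:Killing-pair-compatibility} just spells out what the paper attributes to the construction of $\underline{\Dyn}_G$.

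There are three minor slips that do not affect the argument:
\begin{enumerate}
\item The reduction should be to the open and closed type strata $S_t$, not to connected components, which need not be open.
\item The constant schemes $\underline{X_\E}_{S'}$ are locally constant but not affine over the base.
\item The cocycle condition you flag as the main obstacle is automatic, since the gluing $\iota_{\E_2}^{-1}\circ\iota_{\E_1}$ comes from a sheaf already defined on $S$.
\end{enumerate}
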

\begin{proof}
    Working separately on the open and closed type strata $S_t$, we may assume that the type of $G$ is constant on $S$. Let $S'\to S$ be a surjective \'etale morphism such that $G_{S'}$ has a pinning (see \S\ref{sec:pinning}). The pullback of $\underline{X}^G$ and $\underline{X}^{G,\vee}$ to $S'$ are represented by constant group schemes over $S'$ as we saw above. Then according to \cite[X, Corollaire 5.5]{SGA3VIIIaXVIII}, these sheaves are represented by quasi-isotrivial ``constant tordu" group schemes over $S$. The second statement on embeddings of Dynkin schemes follows from the construction of $\underline{\Dyn}_{G,S'}$ as a colimit over all pinnings $\E$ of $G_{S'}$ of the simple roots $\Delta_\E$. 
\end{proof}

\begin{defi}[Abstract based root data]\label{def:Abstract-RD-scheme}
    Let $\underline{\Delta}^G$ (resp. $\underline{\Delta}^{G,\vee}$) be the image of $\underline{\Dyn}_G$ in $\underline{X}^G$ (resp. $\underline{X}^{G,\vee}$) under the closed embeddings from Proposition \ref{prop:dynkin-scheme-embedding}. Then there is a canonical isomorphism $\underline{\Delta}^G\cong\underline{\Delta}^{G,\vee}$. The quadruple $(\underline{X}^G,\underline{X}^{G,\vee},\underline{\Delta}^G,\underline{\Delta}^{G,\vee})$ is called the scheme of \emph{abstract based root data for $G$}. Its fiber at each geometric point $\Bar{s}\in S$ is the absolute root datum of $G_{\Bar{s}}$ from Definition \ref{def:absolute-RD}.
\end{defi}

\begin{defi}[The abstract Cartan]\label{def:abstract-Cartan}
    Let $T_G$ be the sheaf of abelian groups on the big \'etale site of $S$ associated to the presheaf that sends an $S$-scheme $S'$ to the group 
    \[\varinjlim_{(T,B)\in\Kil^{\mathrm{sp}}(G_{S'})}T(S')\]
    in which the transition maps are given by conjugation by elements in $G(S')/T(S')$. It is represented by the quasi-isotrivial torus over $S$, called \emph{the abstract Cartan for $G$}, whose sheaf of weight lattice is $\SheafHom_S(T_G,\GG_m)=\underline{X}^G$.  By construction there is a natural embedding $Z_G\into T_G$ and the quotient 
    \[T_G/Z_G\cong T_{G_{\ad}}\]
    is the abstract Cartan for the adjoint group $G_\ad=G/Z_G$.
\end{defi}

\begin{defi}[The abstract Weyl group]
    Let $W_G$ be the sheaf (of sets, a priori) on the big \'etale site of $S$ associated to the presheaf that sends an $S$-scheme $S'$ to the set
    \[\varinjlim_{(T,B)\in\Kil^{\mathrm{sp}}(G_{S'})}N_G(T)(S')/T(S')\]
    in which the transition maps are given by conjugation by elements in $G(S')/T(S')$. Since $G$ admits a split maximal torus \'etale locally on $S$, the sheaf $W_G$ is \'etale locally represented by a constant finite group scheme and hence by \cite[X.5.5]{SGA3VIIIaXVIII} it is represented by a finite \'etale group scheme over $S$, called \emph{the abstract Weyl group of $G$}. By construction $W_G$ acts on the abstract Cartan $T_G$ and there is a natural closed embedding of schemes $\underline{\mathrm{Dyn}}_G\into W_G$ whose image on each geometric fiber is a set of simple reflections.
\end{defi}

We have a natural action of the automorphism group scheme $\Aut_G$ on all the schemes defined above: for any scheme $S'\to S$ such that a pinning $\E$ of $G_{S'}$ exists and any element $\delta\in\Aut_G(S')$, the isomorphism $(\delta^{-1})^*\colon X_\E\to X_{\delta(\E)}$ restricts to a bijection $\Delta_\E\xrightarrow{\sim}\Delta_{\delta(\E)}$; and when passing to the colimit and sheafification we get an automorphism of $\underline{X}^G(S')$ that induces a bijection on $\underline{\Delta}^G(S')$. The actions on $T_G$ and $W_G$ are described in a similar way. In particular, we note that the subgroup of inner automorphisms $G_\ad$ acts trivially on all these schemes and so the action factors through the quotient group scheme $\Out_G$. Moreover, the natural $W_G$-actions on $\underline{X}^G$ and $\underline{X}^{G,\vee}$ (and hence also their $\QQ$-versions) are compatible with the $\Aut_G$-action on all these schemes.\par 

All the schemes we have defined so far are canonically associated to the group scheme $G$ over $S$. Now let us describe them more concretely by making some extra choices. Recall from \S\ref{sec:group-splitting} that we have the partition by type $S=\bigsqcup_{t\in\RD(G)} S_t$. Let $G_0$ be the reductive group scheme over $S$ such that $G_0|_{S_t}$ is the split form of $G|_{S_t}$ for each $t\in\RD(G)$. We have the right $\Aut_{G_0}$ torsor $\SheafIsom_{S}(G_0,G)$ and after quotient by the inner automorphism group $G_{0,\ad}$ we get a right $\Out_{G_0}$-torsor
\[\SheafIsom^{\Out}_{S}(G_0,G)\coloneqq\SheafIsom_{S}(G_0,G)/G_{0,\ad}.\]
We can use this torsor to describe the schemes above. For this we may restrict to each open and closed subscheme $S_t$ and hence assume that the type of $G$ is constant. Then the sheaf of based root data for $G_0$ is constant. Let $\Psi=(X,X^\vee,\Delta,\Delta^\vee)$ be its fiber over a geometric point $\Bar{s}\in S$ and let $W_\Psi$ be the Weyl group of the root system underlying $\Psi$. The isomorphism class of $\Psi$ is independent of the choice of $\Bar{s}$, and if $S$ is connected the fibers at different geometric points are uniquely identified through the global sections. Then we have an isomorphism of constant group schemes $\underline{\Aut(\Psi)}_{S}\cong\Out_{G_0}$ and isomorphisms of sheaves
\begin{align*}
    (\underline{X}^G,\underline{X}^{G,\vee},\underline{\Delta}^G,\underline{\Delta}^{G,\vee})&\cong \SheafIsom_{S}(G_0,G)\times^{\Aut_{G_0}}\Psi\\
    &\cong\SheafIsom^{\Out}_{S}(G_0,G)\times^{\Out_{G_0}}\Psi
\end{align*}
where $\Psi$ is understood as a constant sheaf. Similarly we have isomorphisms of $S$-group schemes
\[T_G\cong\SheafIsom^{\Out}_{S}(G_0,G)\times^{\Out_{G_0}}T_{G_0},\qquad W_G\cong\SheafIsom^{\Out}_{S}(G_0,G)\times^{\Out_{G_0}} W_\Psi\]
where $T_{G_0}$ is the abstract Cartan of $G_0$.\par 
This shows that the abstract Cartan, abstract Weyl group and sheaf of based root data for $G$ coincide with those for its quasi-split inner form.

\subsection{Scheme of Weyl cones}
We would like to construct a scheme over $S$ whose fiber at a geometric point $\Bar{s}\in S$ parametrizes the set of Weyl cones in $\underline{X}^G_{\QQ,\Bar{s}}$. Then it will have the extra structure of a sheaf of posets. First let us present the most straightforward approach that utilizes the $\Out_{G_0}$-torsor $\SheafIsom^{\Out}_S(G_0,G)$ from above. 
\begin{defi}
    If the type of $G$ is constant on $S$, we define the \emph{scheme of Weyl cones for $G$} to be the following scheme over $S$:
    \begin{equation}\label{eq:cone-sheaf-description}
        \SheafCone^W(\underline{X}_\QQ^G)=\SheafIsom^{\Out}_{S}(G_0,G)\times^{\Out_{G_0}}\Cone^W(X_\QQ)
    \end{equation}
    In general we obtain $\SheafCone^W(\underline{X}_\QQ^G)$ by gluing the above construction on each constant type stratum $S_t$ for $t\in\RD(G)$. By construction, it represents an fpqc sheaf of posets on $S$. Moreover, the group scheme $\Aut_G$ acts naturally on $\SheafCone^W(\underline{X}_\QQ^G)$ preserving the poset structure, with the subgroup of inner automorphisms $G_\ad$ acting trivially and so the action factors through $\Out_G$. 
\end{defi}
In particular, if $G$ is an inner form of $G_0$ then the $S$-scheme $\SheafIsom^{\Out}_{S}(G_0,G)$ admits a global section (see \cite[XXIV, Remarque 1.11]{SGA3-3new}) and hence $\SheafCone^W(\underline{X}_\QQ^G)$ is a constant scheme over $S$.\par
We have a natural morphism of $S$-schemes
\begin{equation}\label{eq:omega_G}
    \omega_G\colon\SheafCone^W(\underline{X}_\QQ^G)\to(\underline{X}^{G,\vee})^{W_G} 
\end{equation}
which is obtained, on each constant type stratum $S_t$, from the $\Aut(\Psi)$-equivariant map of taking sum of canonical facet normals $\omega\colon\Cone^W(X_\QQ)\to (X^\vee)^W$ in Definition \ref{def:Weyl-cone}.\par
Next we briefly introduce a more intrinsic definition\footnote{We will not use this alternative definition in the following.} of $\SheafCone^W(\underline{X}_\QQ^G)$ that does not use the $\Out_{G_0}$-torsor construction. The idea is to apply Lemma \ref{lem:Weyl-cone}. As a warm up, for any locally constant sheaf $\F$ on the big \'etale site of $S$, we define the sheaf of finite subsets of $\F$ by
\[\mathrm{Fin}(\F)\coloneqq\bigsqcup_{n\ge0}\mathrm{Conf}_n(\F)/\mathfrak{S}_n\]
where $\mathrm{Conf}_n(\F)\subset\F^n$ is the subsheaf whose geometric fibers are pairwise distinct $n$-tuples on which the symmetric group $\mathfrak{S}_n$ acts freely, and $\mathrm{Conf}_0(\F)=S$ is the final object in the category of \'etale sheaves of sets on $S$ (it parametrizes the ``empty subset" of $\F$). Then $\mathrm{Fin}(\F)$ is a locally constant sheaf on the big \'etale site of $S$ and for any scheme $U\to S$ we have
\[\mathrm{Fin}(\F)(U)=\{\text{Locally constant subsheaf }\L\subset\F|_U\text{ with finite fibers}\}.\]
Based on Lemma \ref{lem:Weyl-cone}, we define
\begin{align*}
    \SheafCone^W(\underline{X}_\QQ^G)(U)=\{\text{Finite locally constant subsheaf }\L\subset\underline{X}^{G,\vee}|_U\colon\\ \L_{\Bar{u}}\in K(\underline{X}^{G,\vee}_{\Bar{u}},W_{\Bar{u}}),\;\forall\Bar{u}\in U\}
\end{align*}
where $W_{\Bar{u}}$ is the fiber of the abstract Weyl group $W_G$ at $\Bar{u}$. This coincides with the previous definition using $\Out_{G_0}$-torsors.

\subsection{Split reductive monoids over disconnected base}\label{sec:main-thm-split-case}
In this subsection we extend the split classification in Theorem \ref{thm:split-classification} to disconnected base and examine the effect of changing pinnings.\par  
Assume that $G$ is a split reductive group over a nonempty scheme $S$. Fix a pinning $\E$ of $G$ with reduced based root data $\Psi$ and splitting $(T,X,R)$. Then $\E$ determines an isomorphism of sheaves 
\[\iota_\E\colon\underline{\Cone^W(X_\QQ)}_S\xrightarrow{\sim}\SheafCone^W(\underline{X}^G_\QQ)\]
where the left hand side is the constant sheaf on $S$ with fiber $\Cone^W(X_\QQ)$. The set of global sections $\Gamma(S,\underline{\Cone^W(X_\QQ)}_S)$ is by definition the set of Zariski locally constant functions on $S$ with value in $\Cone^W(X_\QQ)$. For any $(\M,\iota)\in\Mon_G^\bullet(S)$, from Theorem \ref{thm:split-classification} we obtain a function $\varphi_\M\colon S\to\Cone^W(X_\QQ)$ that is constant on each connected component of $S$. To define the desired functor, we need to show that $\varphi_\M$ is locally constant in the Zariski topology\footnote{Note that a function on $S$ that is constant on each connected component may fail to be locally constant (for the Zariski or \'etale topology), since the connected components of $S$ may not be open in $S$. Let us mention the following fact (though we will not use it): If $S$ is locally noetherian, then all its connected components are open (see \cite[X, Rappel 5.11.0]{SGA3-2new}).}. Choose any geometric point $\Bar{s}\in S$ and an affine open subscheme $U\subset S$ containing $\Bar{s}$. Let $E\subset S$ be the connected component of $U$ containing $\Bar{s}$. Let $C=\varphi_\M(\Bar{s})\in\Cone^W(X_\QQ)$ and let $(\M_C,\iota_C)\in\Mon_G^\bullet(S)$ be the base change to $S$ of the rigidified reductive monoid over $\Spec(\ZZ)$ associated to the cone $C$ by Theorem \ref{thm:split-classification}. Then we have an isomorphism of monoid schemes over $E$:
\[\M|_E\cong\M_C|_E\]
that is compatible with the rigidifications. By \cite[\href{https://stacks.math.columbia.edu/tag/00EB}{Section 00EB}]{Stacks}, the component $E$ is a closed subscheme of $S$ and can be written as the inverse limit of certain open and closed affine neighborhoods $U_j\subset U$. Since $\M|_U$ and $\M_C|_U$ are of finite presentation over $U$, the isomorphism above and its inverse extend to some open and closed affine neighborhood $U_j$. The identities expressing that they are inverses, preserve multiplication and unit, and agree with the rigidifications also descend after increasing $j$. See for example \cite[\href{https://stacks.math.columbia.edu/tag/01ZM}{Lemma 01ZM}]{Stacks}. Thus the value of $\varphi_\M$ is constant on the affine open neighborhood $U_j$ of $\Bar{s}$. We conclude that there are well-defined functors
\[\Mon_G^\bullet(S)\xrightarrow{C_\E}\Gamma(S_{\mathrm{\acute{e}t}}, \underline{\Cone^W(X_\QQ)}_S)\xrightarrow[\sim]{\iota_\E}\Gamma(S_{\mathrm{\acute{e}t}},\SheafCone^W(\underline{X}_\QQ^G))\]
where the second one is the equivalence induced by the sheaf isomorphism $\iota_\E$. The functor $C_\E$ is faithful since both are thin categories. To see its fullness and essential surjectivity, we can restrict to open subsets where the cone-valued function is constant. Then the groups and monoids are obtained by base change from $\Spec(\ZZ)$ and we conclude by Theorem \ref{thm:split-classification}. Thus the functor $C_\E$ is also an equivalence of thin categories that is moreover compatible with the $\Aut_G(S)$-actions as in Theorem \ref{thm:split-classification}(2).\par  
Next we examine the effect of changing pinnings. Let $\E'$ be another $\Psi$-pinning of $G$ giving rise to a splitting $(T',X,R)$. By Lemma \ref{lem:pinning}, there exists $(u,v)\in G_{\ad}(S)\times\Gamma(S,\underline{\Aut(\Psi)}_S)$ such that $(u,v)\cdot\E=\E'$. Let $(T,B)$ (resp. $(T',B')$) be the split Killing pair determined by $\E$ (resp. $\E'$). Then we have $\ad(u)(T)=T'$ and $\ad(u)(B)=B'$. 
We have the following commutative diagram:
\begin{equation}\label{eq:change-pinning-X}
    \xymatrix{
    \underline{X}_S\ar[r]^{j_{\E'}\qquad}_{\sim\qquad}\ar[d]_{v^{-1}} & \SheafHom_S(T',\GG_m)\ar[r]^{\qquad\iota_{(T',B')}}\ar[d]^{\ad(u)^*} & \underline{X}^G\ar[d]^{=} \\
    \underline{X}_S\ar[r]^{j_{\E}\qquad}_{\sim\qquad}  &  \SheafHom_S(T,\GG_m)\ar[r]^{\qquad\iota_{(T,B)}} & \underline{X}^G
    }
\end{equation}

where the isomorphism $j_{\E'}$ (resp. $j_\E$) is part of the splitting data in $\E'$ (resp. $\E$), the left square commutes by Lemma \ref{lem:pinning} and the right square commutes by Lemma \ref{lem:Killing-pair-compatibility}.
\[\xymatrix{
\Mon_G^\bullet(S)\ar[r]^{C_{\E'}\qquad}\ar[d]_{u^{-1}} & \Gamma(S_{\mathrm{\acute{e}t}}, \underline{\Cone^W(X_\QQ)}_S)\ar[r]^{\iota_{\E'}}\ar[d]^{v^{-1}} & \Gamma(S_{\mathrm{\acute{e}t}},\SheafCone^W(\underline{X}^G_\QQ))\ar[d]^{=}\\
\Mon_G^\bullet(S)\ar[r]^{C_\E\qquad} & \Gamma(S_{\mathrm{\acute{e}t}}, \underline{\Cone^W(X_\QQ)}_S)\ar[r]^{\iota_{\E}} & \Gamma(S_{\mathrm{\acute{e}t}},\SheafCone^W(\underline{X}^G_\QQ))
}\]
The commutativity of the right square follows from the commutativity of the outer square in \eqref{eq:change-pinning-X}. Next we examine the left square. Let $(\M,\iota)\in\Mon_G^\bullet(S)$. Then $u^{-1}(\M,\iota)=(\M,\iota\circ\ad(u))$. A section $\chi\in\Gamma(S,\underline{X}_S)$ lies in $C_{\E'}(\M,\iota)$ if and only if $j_{\E'}(\chi)$ extends to a regular function of $\overline{\iota(T')}=\overline{\iota\circ\ad(u)(T)}$ along $\iota|_{T'}$, which is to say that $\ad(u)^*(j_{\E'}(\chi))=j_{\E}(v(\chi))$ extends along $\iota\circ\ad(u)|_T$. This is equivalent to the condition $v^{-1}(\chi)\in C_{\E}(\M,\iota\circ\ad(u))$. Thus the left square above also commutes. Therefore after passing to the isomorphism classes of objects, we conclude that the composition $\iota_\E\circ C_\E$ induces a canonical isomorphism of posets
\[C_S\colon\;\Mon^\circ_G(S)\xrightarrow{\sim}\Gamma(S,\SheafCone^W(\underline{X}^G_\QQ))\]
that is independent of the choice of the pinning $\E$ and compatible with the $\Aut_G(S)$-action on both sides. 

\section{The general classification theorem}\label{sec:general-classification}

\subsection{The case of a field}\label{sec:classification-field}
Let $G$ be a reductive group over a field $k$. Let $k_s$ be a separable closure of $k$ and let $\Gamma\coloneqq\mathrm{Gal}(k_s/k)$. We use notations from \S\ref{sec:absolute-RD}: so $\Psi$ is the absolute root datum of $G$ on which $\Gamma$ acts through the homomorphism $\rho_G$ in \eqref{eq:rho-G}. 
\begin{prop}\label{prop:absolute-characterization}
    Suppose that $k$ is a separably closed field. Then there exists a unique isomorphism of posets \[C=C_G\colon\;\Mon_G^\circ(k)\xrightarrow{\sim}\Cone^W(X_\QQ)\]
    such that for any maximal torus $T\subset G$, the following diagram of posets is commutative
    \[\xymatrix{
        \Mon_G^\circ(k)\ar[r]^{C_G\quad}_{\sim\quad}\ar[d] & \Cone^W(X_\QQ)\ar[d]\\
        \Mon_T^\circ(k)\ar[r]^{C_T\qquad}_{\sim\qquad} & \Cone_{\mathrm{fd}}(X^*(T)_\QQ)
    }\]
    where 
    \begin{itemize}
        \item the left vertical functor sends the isomorphism class of a pair $(\M,\iota)$ to the isomorphism class of $(\M_T,\iota|_T)$ where $\M_T$ is the schematic closure of $\iota(T)$ in $\M$;
        \item the right vertical functor is defined as follows: first choose a Borel $B$ containing $T$ so that the Killing pair $(T,B)$ determines an isomorphism $j_B\colon X\cong X^*(T)$ by Definition \ref{def:absolute-RD}, and induces the right vertical map; then we note that the $W$-orbit of $j_B$ is independent of the choice of $B$ so the right vertical map is also independent of the choice since the source consists of $W$-stable cones. 
    \end{itemize}
\end{prop}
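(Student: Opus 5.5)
Over a separably closed field $k$ the group $G$ is split and $\Spec(k)$ is connected, so the plan is to take $C_G$ to be the isomorphism $C_{\Spec k}$ constructed at the end of \S\ref{sec:main-thm-split-case}. Composed with the identification $\Gamma(\Spec k,\SheafCone^W(\underline{X}^G_\QQ))=\Cone^W(X_\QQ)$, this is an isomorphism of posets, because $\underline{X}^G$ over $\Spec k$ is the constant sheaf with fiber the absolute $X$ of Definition \ref{def:absolute-RD}. Concretely, choose a pinning $\E$ of $G$ with splitting $(T_\E,X_\E,R_\E)$ and Killing pair $(T_\E,B_\E)$. Then $C_G=\iota_\E\circ C_\E$, where $C_\E$ is the equivalence of Theorem \ref{thm:split-classification}(1) passed to isomorphism classes, and $\iota_\E$ identifies $X_\E$ with $X$ via $j_{B_\E}$. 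Independence of $\E$ was already established in \S\ref{sec:main-thm-split-case}.

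Commutativity of the square is checked first for $T=T_\E$. By the very definition of $C_\E$, $C_\E(\M,\iota)$ is the cone of characters $\chi$ such that $j_\E(\chi)$ extends to a regular function on $\M_{T_\E}$. On the other side, \cite[Theorem 2.19]{CJ} applied to the torus case says that $C_T(\M_T,\iota|_T)$ is the cone generated by the characters of $T$ extending to $\M_T$. So both paths send $(\M,\iota)$ to the same cone once $X$ is identified with $X^*(T_\E)$ through $j_{B_\E}$, and by construction of $\iota_\E$ this identification is exactly $j_{B_\E}$. Strictly speaking one should note that $C_T$ is indeed this cone; this is the split classification for tori, i.e.\ Theorem \ref{thm:split-classification} with $W$ trivial.

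For an arbitrary maximal torus $T\subset G$, I will use the fact that over a separably closed field every maximal torus is $G(k)$-conjugate to $T_\E$: write $T=\ad(g)(T_\E)$ and set $B=\ad(g)(B_\E)$, and let $\E'=\ad(g)(\E)$ be the transported pinning. Since $C_G$ is independent of the pinning, we may compute it with $\E'$, and the first case then gives commutativity for $(T,B)$. The right vertical arrow does not depend on the choice of $B$ by the $W$-orbit remark in the statement, using that $W$-conjugate Borels differ by elements of $N_G(T)(k)$ acting on $X^*(T)$ through $W$. Cones in the source are $W$-stable, so this ambiguity is harmless. Equivalently, one can argue via the diagram \eqref{eq:change-pinning-X} with $v=1$, together with Lemma \ref{lem:Killing-pair-compatibility}.

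Uniqueness is immediate. Fix one maximal torus $T$; the right vertical map is injective, being induced by the isomorphism $j_B$ on cones, so any $C_G$ making the square commute is determined by the composite through $\Mon_T^\circ(k)$. The main (mild) obstacle is the bookkeeping that the identification $\iota_\E$ of Lemma \ref{lem:Killing-pair-compatibility} agrees with $j_{B_\E}$ of Definition \ref{def:absolute-RD}, including the direction of the transition maps $\Ad(g^{-1})^*$. Both are defined as colimits over the same setoid of Killing pairs, so they agree.
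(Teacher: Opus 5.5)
Your proposal is correct and is essentially the paper's argument: $C_G$ is the pinning-independent isomorphism from \S\ref{sec:main-thm-split-case}, the square commutes by the definition of $C_\E$, and uniqueness follows from injectivity of the right vertical map. Your extra step for an arbitrary maximal torus fills in a detail the paper leaves implicit. There you conjugate the pinning. One could equally just choose a pinning whose torus is $T$, which exists since $T$ is split over $k$.
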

\begin{proof}
    The existence of $C_G$ is a special case of the discussion in \S\ref{sec:main-thm-split-case}. The diagram commutes by the definition of $C_G$ from $C_\E$ for a pinning $\E$. The uniqueness follows from the injectivity of the right vertical map.   
\end{proof}

\begin{theo}
    Let $\Cone^W(X_\QQ)^{\rho_G(\Gamma)}$ be the sub-poset of Weyl cones $C\in\Cone^W(X_\QQ)$ such that $\rho_G(\sigma)(C)=C$ for all $\sigma\in\Gamma$. Then there exists a unique isomorphism of posets
    \[C\colon\;\Mon_G^\circ(k)\xrightarrow{\sim}\Cone^W(X_\QQ)^{\rho_G(\Gamma)}\]
    such that for any separably closed field $K$ containing $k$, it coincides with the restriction of the isomorphism in Proposition \ref{prop:absolute-characterization} for the split group $G_K$ after choosing an embedding $k_s\into K$ and identifying the absolute root data defined with respect to $k_s$ and $K$.
\end{theo}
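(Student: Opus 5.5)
The plan is Galois descent from $k_s$ to $k$. First I reduce to $K=k_s$. For any separably closed $K\supset k_s$, base change gives a map $\Mon^\circ_{G_{k_s}}(k_s)\to\Mon^\circ_{G_K}(K)$. This map is compatible with the isomorphisms of Proposition \ref{prop:absolute-characterization}, because the right vertical maps there are compatible with base change of a maximal torus and Borel defined over $k_s$. Indeed, the construction in \S\ref{sec:main-thm-split-case} via a pinning $\E$ over $k_s$ is stable under base change. So the $K$-version of $C$ is determined by the $k_s$-version, and it suffices to construct $C$ compatibly with $C_{G_{k_s}}$. Uniqueness is then automatic once I show that the base change map $\Mon^\circ_G(k)\to\Mon^\circ_{G_{k_s}}(k_s)$ is injective.

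Next I set up the descent. Let $\Gamma$ act on $\Mon^\circ_{G_{k_s}}(k_s)$ by twisting: $\sigma$ sends $(\M,\iota)$ to the base change of $\M$ along $\sigma\colon k_s\to k_s$, with the rigidification transported using the canonical $k$-structure $G_{k_s}\cong\sigma^*G_{k_s}$. The key compatibility I would prove is
\[C_{G_{k_s}}(\sigma\cdot(\M,\iota))=\rho_G(\sigma)\bigl(C_{G_{k_s}}(\M,\iota)\bigr).\]
To prove it, I pick a Killing pair $(T,B)$ over $k_s$. Then $\sigma$ transports $\M_T$ to $(\sigma\M)_{\sigma T}$ and characters extending to $\M_T$ to characters extending to $(\sigma\M)_{\sigma T}$. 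The Killing pair $(\sigma T,\sigma B)$ is $G(k_s)$-conjugate to $(T,B)$. Transporting $j_B$ therefore gives $j_{\sigma B}\circ\rho_G(\sigma)$, by the definition of $\rho_G$ via the colimit in Definition \ref{def:absolute-RD}. The diagram in Proposition \ref{prop:absolute-characterization}, together with the injectivity of its right vertical map, then yields the formula.

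Then I do the descent itself. Since the categories are thin and morphisms are determined on the unit group (\cite[Lemma 2.1]{CJ}), descent data on a rigidified monoid are unique when they exist. A $\Gamma$-fixed isomorphism class $(\M,\iota)$ over $k_s$ carries canonical isomorphisms $\phi_\sigma\colon\sigma^*\M\cong\M$ compatible with rigidifications. The cocycle condition holds automatically by uniqueness. Continuity holds because $\M$ and the isomorphisms are defined over a finite Galois subextension. Since $\M$ is affine, Galois descent produces a $k$-monoid $\M_0$ with $\iota$ descending to $G\cong\M_0^\times$. Flatness, finite presentation and the geometric fiber conditions are insensitive to the field extension, and formation of the unit group commutes with base change, so $\M_0$ is a reductive $G$-monoid. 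Conversely, any $k$-form gives a $\Gamma$-fixed class, and two $k$-forms that are isomorphic over $k_s$ compatibly with rigidifications are isomorphic over $k$, by descent of the unique isomorphism. Morphisms descend the same way, so $\Mon^\circ_G(k)\cong\bigl(\Mon^\circ_{G_{k_s}}(k_s)\bigr)^\Gamma$ as posets. Combining this with the equivariance formula identifies the right side with $\Cone^W(X_\QQ)^{\rho_G(\Gamma)}$.

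I expect the main obstacle to be the equivariance step, that is, checking carefully that the semilinear twist matches the sign and direction conventions of $\rho_G$ and of the transition maps $\Ad(g^{-1})^*$. To handle it, I would first verify the formula when $(T,B)$ is defined over $k$. Such a pair exists when $G$ is quasi-split, and there $\rho_G(\sigma)$ is literally the Galois action on $X^*(T_{k_s})$. I would then reduce the general case to this by the independence of choices already shown in Proposition \ref{prop:absolute-characterization}.
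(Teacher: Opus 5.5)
Your proposal is correct. For the invariance computation and for fullness it matches the paper. The paper also takes an arbitrary Killing pair $(T,B)$ of $G_{k_s}$ and writes $\rho_G(\sigma)=\iota_{(T^\sigma,B^\sigma)}\circ(\sigma^{-1})^*\circ\iota_{(T,B)}^{-1}$. It then descends the unique $k_s$-morphism between two rigidified monoids, which is forced to be Galois-invariant.

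The genuine difference is in essential surjectivity. The paper chooses a finite Galois splitting field $k'$ and an isomorphism $G_{0,k'}\cong G_{k'}$ with the pinned split form, and takes the split monoid $(\M_0,\iota_0)$ with cone $C$. It then uses Corollary~\ref{cor:monoid-automorphism-field} (so ultimately Theorem~\ref{thm:split-classification}(2)) to see that the cocycle defining $G$ takes values in $\Aut_{\M_0}(k')$, and twists $\M_0$ by it. Alternatively, it takes Galois invariants of the explicit coordinate ring ${}_{k'}\bfO(\L)$ from \cite{CJ}.

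You instead prove the equivariance formula for \emph{every} rigidified $G_{k_s}$-monoid; the paper only needs it for those coming from $k$. So the $k_s$-monoid with cone $C$ is a $\Gamma$-fixed point, and rigidity does the rest: the $\phi_\sigma$ are unique, the cocycle condition and continuity are automatic, and affine Galois descent finishes. This is the mechanism the paper itself uses later in Steps~1--2 of Theorem~\ref{thm:general-classification}. Your route avoids computing $\Aut_{\M_0}$ and matching the image of the cocycle of $G$ in $\Out_{G_0}$ with $\rho_G$. The paper's route has the merit of exhibiting the descended monoid concretely as a twist of a split model.

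One caveat about your last paragraph. The fallback of first checking equivariance with a $k$-rational Killing pair and then reducing to the quasi-split case ``by the independence of choices in Proposition~\ref{prop:absolute-characterization}'' would not work as stated. That independence concerns choices of $(T,B)$ over $k_s$ and says nothing relating $G$ to its quasi-split inner form. For that you would need that an inner twist changes the $\Gamma$-action on $\Mon^\circ_{G_{k_s}}(k_s)$ only by elements of $G_{\ad}(k_s)$, which act trivially. The fallback is also unnecessary, since your direct argument with an arbitrary Killing pair over $k_s$ already handles the general case.

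Similarly, uniqueness of $C$ follows from the compatibility condition for $K=k_s$ alone. Injectivity of base change is needed for $C$ to be an isomorphism, not for its uniqueness.
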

\begin{proof}
    Let $(\M,\iota)$ be an object in $\Mon_G^\bullet(k)$, where $\M$ is a reductive monoid over $k$ and $\iota\colon G\xrightarrow{\sim}\M^\times$ is an isomorphism of $k$-groups. From Proposition \ref{prop:absolute-characterization} we get a Weyl cone $C(\M_{k_s})$ from the base change $(\M_{k_s},\iota)\in\Mon_G^\bullet(k_s)$. We first show that $C(\M_{k_s})$ is $\rho_G(\Gamma)$-invariant so that we get the desired functor.\par 
    Choose any Killing pair $(T,B)$ of $G_{k_s}$. Then for any $\sigma\in\Gamma$, the action of $\rho_G(\sigma)$ on $X_\QQ$ can be expressed as the composition of the following isomorphisms:
    \[X_\QQ\xrightarrow{\iota_{(T,B)}^{-1}} X^*(T)_\QQ\xrightarrow{(\sigma^{-1})^*}X^*(\sigma T)_\QQ\xrightarrow{\iota_{(T^\sigma,B^\sigma)}} X_\QQ\]
    where $\iota_{(T,B)}$ is the isomorphism from Lemma \ref{lem:Killing-pair-compatibility}. By the definition of $C(\M_{k_s})$ we have 
    \[\iota_{(T,B)}^{-1}(C(\M_{k_s}))=C(\overline{\iota(T)}),\quad \iota_{(T^\sigma,B^\sigma)}^{-1}(C(\M_{k_s}))=C(\overline{\iota(T^\sigma)}).\]
    On the other hand, the automorphism $\sigma$ on $\M_{k_s}$ sends $\overline{\iota(T)}$ to $\overline{\iota(T)^\sigma}$ and so
    \[(\sigma^{-1})^*C(\overline{\iota(T)})=C(\overline{\iota(T)^\sigma}).\]
    Since $\iota$ is defined over $k$, we have $\iota(T^\sigma)=\iota(T)^\sigma$ and taking closures in $\M_{k_s}$ we get $\overline{\iota(T^\sigma)}=\overline{\iota(T)^\sigma}$. Then from the above identities we deduce $\rho_G(\sigma)(C(\M_{k_s}))=C(\M_{k_s})$ and so $C(\M_{k_s})$ is $\rho_G(\Gamma)$-invariant. This shows that we get a well-defined functor $\Mon_G^\bullet(k)\to\Cone^W(X_\QQ)^{\rho_G(\Gamma)}$. It is faithful since both are thin categories. \par
    Next we show that the functor is full. Let $(\M_1,\iota_1)$ and $(\M_2,\iota_2)$ be two objects in $\Mon_G^\bullet(k)$ with cones $C_1=C(\M_{1,k_s})$ and $C_2=C(\M_{2,k_s})$. Suppose that $C_1\supset C_2$. By the equivalence in split case Theorem \ref{thm:split-classification}, there exists a monoid homomorphism $\M_{1,k_s}\to\M_{2,k_s}$ compatible with the rigidifications $\iota_1$ and $\iota_2$. Faithfulness in the split case forces this morphism to be invariant under Galois action and hence descends to a morphism $\M_1\to\M_2$ in $\Mon_G^\bullet(k)$ that corresponds to the inclusion $C_1\supset C_2$. \par     
    It remains to show essential surjectivity. Let $C\subset X_\QQ$ be a $\rho_G(\Gamma)$-stable Weyl cone. Let $G_0$ be a pinned split reductive group over $k$ with based root datum $\Psi$. Let $k'/k$ be a finite Galois extension contained in $k_s$ such that $G_{k'}$ is split and fix an isomorphism $G_{0,k'}\cong G_{k'}$. Then the homomorphism $\rho_G$ factors through a homomorphism $\mathrm{Gal}(k'/k)\to\Aut(\Psi)$.
    By Theorem \ref{thm:split-classification},  there exists a rigidified $G_{0,k'}$-monoid $(\M_0,\iota_0)\in\Mon_{G_{0,k'}}^\bullet(k')$ whose Weyl cone is $C$. The descent datum of $G$ is a cocycle of $\mathrm{Gal}(k'/k)$ with values in $\Aut_{G_0}(k')$. Since $C$ is $\rho_G(\Gamma)$-stable, the values of this cocycle belong to the subgroup $\Aut_{\M_0}(k')$ by Corollary \ref{cor:monoid-automorphism-field} and provide a descent datum for $(\M_0,\iota_0)$. Consequently we get an object $(\M_C,\iota)\in\Mon_G^\bullet(k)$ whose Weyl cone is $C$. Alternatively one can also construct the object $(\M_C,\iota)$ directly as follows. Recall from \cite[Theorem-Definition 2.17]{CJ} that the coordinate ring of $\M'$ is given by
    \[k'[\M']={}_{k'}\bfO(\L)\]
    where $\L=C\cap X^+$ is the weight monoid of $\M'$ and we refer to \emph{loc. cit.} for the notations and details. The upshot is that this expression, combined with the fact that $C$ and hence $\L$ are preserved by the $\rho_G(\Gamma)$ action, implies that the semilinear $\mathrm{Gal}(k'/k)$-action on $k'[G]$ preserves the subring $k'[\M']$. Then we define 
    \[\M_C\coloneqq\Spec \bigl((k'[\M'])^{\mathrm{Gal}(k'/k)}\bigr)\] 
    and the rigidification $\iota$ comes from the natural embedding 
    \[(k'[\M'])^{\mathrm{Gal}(k'/k)}\into (k'[G_{k'}])^{\mathrm{Gal}(k'/k)}=k[G].\]
\end{proof}

\subsection{The main theorem and applications}\label{sec:main-thm-general}    
Now we can state our main result.
\begin{theo}\label{thm:general-classification}
    Let $G$ be a reductive group scheme over a nonempty scheme $S$. Then the presheaf $\Mon_G^\circ$ on $S$ is representable by a scheme with an action of $\Aut_G$. Moreover, there exists a unique $\Aut_G$-equivariant isomorphism of schemes over $S$:
    \[C\colon\;\Mon_G^\circ\xrightarrow{\sim}\SheafCone^W(\underline{X}_\QQ^G)\]
    that satisfies the following properties:
    \begin{enumerate}
        \item For any $S$-scheme $S'$, it preserves the partial order on $S'$-points;
        \item On any geometric point $\Bar{s}\colon\Spec(k)\to S$ it gives the isomorphism $C_{G_{\Bar{s}}}$ in Proposition \ref{prop:absolute-characterization}.
    \end{enumerate}
    In particular, for any $S$-scheme $S'$ it induces a canonical bijection from $\Mon_G(S')$, the set of isomorphism classes of reductive $G_{S'}$-monoids, to the set of $\Aut_G(S')$-orbits on $\Hom_S(S',\SheafCone^W(\underline{X}_\QQ^G))$.
\end{theo}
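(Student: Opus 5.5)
The plan is to construct $C$ locally using the split case of \S\ref{sec:main-thm-split-case}, and then glue by étale descent. There are three ingredients: the canonical bijection $C_S$ for split groups over arbitrary bases, its independence of the pinning, and its compatibility with the $\Aut_G(S)$-action. Representability of $\Mon^\circ_G$ will then follow once we know that $\Mon^\circ_G$ is an fpqc (or at least étale) sheaf isomorphic to the scheme $\SheafCone^W(\underline{X}^G_\QQ)$.

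\textbf{Step 1: the map on split test schemes.} For any $S$-scheme $S'$ over which $G_{S'}$ admits a pinning $\E$, we have the canonical poset isomorphism
\[C_{S'}\colon\Mon^\circ_G(S')\xrightarrow{\sim}\Gamma(S',\SheafCone^W(\underline{X}^G_\QQ)),\]
constructed in \S\ref{sec:main-thm-split-case}. I would check that $C_{S'}$ is functorial in $S'$, i.e. compatible with base change $S''\to S'$. This holds because:
\begin{itemize}
\item schematic closure of $\iota(T)$ commutes with flat base change, and more generally with any base change by \cite[Theorem 2.19]{CJ}, since $\M_T$ is itself a reductive $T$-monoid whose character monoid is read off fiberwise;
\item pinnings pull back to pinnings.
\end{itemize}
The $\Aut_G(S')$-equivariance was already established there.

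\textbf{Step 2: $\Mon^\circ_G$ is an étale sheaf.} Since $\Mon^\bullet_G$ is thin, this amounts to two things:
\begin{itemize}
\item morphisms glue, which follows from schematic density of $G$ in $\M$ (\cite[Lemma 2.1]{CJ}), so morphisms are unique and determined on unit groups;
\item objects glue, which is effective descent for affine schemes.
\end{itemize}
For objects: given $(\M',\iota')$ over an étale cover $S'\to S$ with an isomorphism of the two pullbacks to $S'\times_S S'$ compatible with rigidifications, the isomorphism is unique. Hence the cocycle condition holds automatically, and affineness gives a descended affine monoid scheme $\M$ with rigidification. Flatness, finite presentation, and normal integral geometric fibers all descend.

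\textbf{Step 3: descend the map.} With both sides étale sheaves and $C_{S'}$ an isomorphism on a cover where pinnings exist, compatible with pullbacks, the maps $C_{S'}$ glue to an isomorphism of sheaves $\Mon^\circ_G\cong\SheafCone^W(\underline{X}^G_\QQ)$. This gives representability, property (1), and $\Aut_G$-equivariance. Property (2) holds by Proposition \ref{prop:absolute-characterization}, since at geometric points $C_{S'}$ restricts to $C_{G_{\Bar s}}$.

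Uniqueness follows from (2): two morphisms of étale locally constant schemes (disjoint unions of étale $S$-schemes) that agree on all geometric points coincide. The final statement about $\Mon_G(S')$ is then the preceding Remark combined with equivariance.

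\textbf{Main obstacle.} I expect the subtle point to be the base-change compatibility in Step 1 over non-flat $S''\to S'$. One must show that the cone attached to $\M_{S''}$ equals the pullback of the cone attached to $\M$. I would handle this by reducing to connected pieces where the cone is constant. There, by the argument in \S\ref{sec:main-thm-split-case}, $\M$ is locally isomorphic to a base change $\M_C$ from $\Spec(\ZZ)$, and for such base changes the cone is visibly unchanged.
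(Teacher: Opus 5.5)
Your proposal is correct and takes essentially the paper's route. The ingredients match: the pinned classification over possibly disconnected bases from \S\ref{sec:main-thm-split-case}; \'etale descent of rigidified monoids, where the cocycle condition is automatic because morphisms in the thin category $\Mon^\bullet_G$ are unique; base-change compatibility reduced to the split case via local isomorphisms with monoids pulled back from $\Spec(\ZZ)$; and uniqueness checked on geometric fibers of the \'etale locally constant scheme $\SheafCone^W(\underline{X}^G_\QQ)$. The difference is only organizational. You first prove $\Mon^\circ_G$ is an \'etale sheaf and then glue $C$ from pinned test schemes, whereas the paper matches global cone sections with descent data on a splitting cover and then proves effectivity, even for fpqc covers. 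One small point: gluing morphisms needs fpqc descent of morphisms of affine schemes, not just their uniqueness.
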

\begin{proof}
    \emph{Step 1: Identify cone section with descent data.} By working on each constant type stratum $S_t$ separately we may and do assume that the type of $G$ is constant on $S$. Then there exists an \'etale covering $S'\to S$ (hence also an fpqc covering) such that $G_{S'}$ is split. Choose a pinning $\E$ of $G_{S'}$. We have constructed in \S\ref{sec:main-thm-split-case} a canonical isomorphism of posets:
    \[C_{S'}\colon\;\Mon^\circ_G(S')\xrightarrow{\sim}\Gamma(S',\SheafCone^W(\underline{X}^G_\QQ))\]
    For a section $c\in\Gamma(S,\SheafCone^W(\underline{X}^G_\QQ))$, let $c'\in \Gamma(S',\SheafCone^W(\underline{X}^G_\QQ))$ be its pullback and let $(\M',\iota')\in\Mon^\bullet_G(S')$ be an object corresponding to $c'$ under the above equivalence. Since the two pullbacks of $c'$ to $S'\times_SS'$ agree, by the full-faithfulness of the split classification in \S\ref{sec:main-thm-split-case} there exists a unique isomorphism between the two pullbacks of $(\M',\iota')$ to $S'\times_SS'$. Moreover, it satisfies the cocycle condition by the uniqueness of the isomorphism between pullbacks of $(\M',\iota')$ to the triple product $S'\times_SS'\times_SS'$. We conclude that $C_{S'}$ induces a bijection between $\Gamma(S,\SheafCone^W(\underline{X}^G_\QQ))$ and the set of isomorphism classes of descent data (with respect to $S'\to S$) for objects in $\Mon_G^\bullet(S')$.\par       
    \emph{Step 2: Bijection on $S$-points.} By Step 1, to show bijection on $S$-points it suffices to show that the natural functor from $\Mon_G^\bullet(S)$ to the category of descent data for objects in $\Mon_G^\bullet(S')$ with respect to the \'etale cover $S'\to S$ is an equivalence. In fact we will show the equivalence under the weaker assumption that $S'\to S$ is an fpqc cover. The full faithfulness of this functor follows from the fully-faithfulness of the descent data functor on the category of quasi-coherent sheaves, see \cite[\href{https://stacks.math.columbia.edu/tag/023T}{Proposition 023T}]{Stacks}. To show effectivity we first use the fact that fpqc descent data for affine schemes are effective, see \cite[\href{https://stacks.math.columbia.edu/tag/0244}{Section 0244}]{Stacks}. Once the underlying affine scheme descends, the monoid structure automatically descends. By \cite[\href{https://stacks.math.columbia.edu/tag/02L2}{Lemma 02L2}]{Stacks}, the flatness property descends. By \cite[\href{https://stacks.math.columbia.edu/tag/02KY}{Lemma 02KY}]{Stacks} and affineness, the finite presentation property descends. Finally the property of having integral normal geometric fibers is clearly fpqc local and hence also descends. Thus we have shown that given an object $(\M',\iota')\in\Mon_G^\bullet(S')$ with descent data relative to the fpqc covering $S'\to S$, we have obtained a reductive $G$-monoid $\M$ such that $\M_{S'}\cong\M'$. Finally, the descent data for the rigidification $\iota'\colon G_{S'}\xrightarrow{\sim}(\M')^\times$ is clearly also effective and induces a rigidification $\iota\colon G\xrightarrow{\sim}\M^\times$. This shows the effectivity of fpqc descent for objects in $\Mon^\bullet_G(S')$ and hence shows that the isomorphism $C_{S'}$ restricts to an isomorphism of posets:
    \[C_S\colon\Mon_G^\circ(S)\xrightarrow{\sim}\Gamma(S,\SheafCone^W(\underline{X}^G_\QQ)).\]
    Moreover the map $C_S$ is $\Aut_G(S)$-equivariant, as one can check on the splitting cover $S'$ and hence reduce to the split case in \S\ref{sec:main-thm-split-case}.\par 
    \emph{Step 3: Unique isomorphism of schemes.} We claim that the bijection in Step 2 is compatible with base change along any morphism $U\to S$. Indeed, after applying the descent argument on $U$ we are reduced to proving the claim in the split case. Then it follows from the construction in \S\ref{sec:main-thm-split-case}. Therefore we conclude that $\Mon^\circ_G$ is representable by a scheme over $S$ and there is an $\Aut_G$-equivariant isomorphism of $S$-scheme $C\colon\Mon^\circ_G\xrightarrow{\sim}\SheafCone^W(\underline{X}_\QQ^G)$. We already proved in Step 2 that condition (1) is satisfied. The base change compatibility shows that condition (2) is also satisfied. Finally since $\SheafCone^W(\underline{X}_\QQ^G)$ is an \'etale locally constant scheme over $S$, equality of two isomorphisms can be checked on geometric fibers and so the uniqueness of $C$ follows from Proposition \ref{prop:absolute-characterization}.    
\end{proof}
\begin{cor}\label{cor:Aut(M)}
    Let $(\M,\iota)$ be a rigidified reductive $G$-monoid corresponding to the section 
    \[c=C(\M,\iota)\in\Gamma(S,\SheafCone^W(\underline{X}_\QQ^G)).\]
    Then the group scheme of monoid automorphisms $\Aut_\M$ is identified, under the natural embedding $\Aut_\M\into\Aut_G$ induced by $\iota$, with the sub-group scheme $\Aut_{G,c}$ of stabilizers of $c$. 
\end{cor}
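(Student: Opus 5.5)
The plan is to prove the identification $\Aut_\M=\Aut_{G,c}$ as subfunctors of $\Aut_G$ by comparing $S'$-points for every $S$-scheme $S'$, using the equivariance and base-change compatibility in Theorem \ref{thm:general-classification}.

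First, the map $\Aut_\M\to\Aut_G$ is a monomorphism. For $\phi\in\Aut_\M(S')$, put $\delta=\iota^{-1}\circ\phi|_{\M^\times}\circ\iota\in\Aut_G(S')$. Since $G_{S'}$ is schematically dense in $\M_{S'}$ (\cite[Lemma 2.1]{CJ}, as in the proof of Lemma \ref{lem:inner-aut-extend}), $\phi$ is determined by $\delta$. So $\Aut_\M(S')$ is identified with the set of $\delta\in\Aut_G(S')$ that extend, through $\iota$, to an automorphism of $\M_{S'}$. We must show this set equals the stabilizer of $c_{S'}$ under the $\Aut_G(S')$-action on $\Gamma(S',\SheafCone^W(\underline{X}^G_\QQ))$.

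Second, $\delta$ extends to an automorphism of $\M_{S'}$ if and only if $(\M_{S'},\iota)$ and $(\M_{S'},\iota\circ\delta^{-1})$ are isomorphic in $\Mon^\bullet_G(S')$.
\begin{itemize}
\item If $\phi$ extends $\delta$, then $\phi^{-1}$ is a monoid isomorphism carrying $\iota\circ\delta^{-1}$ to $\iota$ on unit groups.
\item Conversely, an isomorphism $f\colon(\M_{S'},\iota)\to(\M_{S'},\iota\circ\delta^{-1})$ satisfies $f\circ\iota=\iota\circ\delta^{-1}$. Hence $f^{-1}$ extends $\delta$.
\end{itemize}
By the definition of the action in the Remark following Definition \ref{def:monoid-category}, the second object is $\delta\cdot(\M_{S'},\iota)$.

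Third, apply Theorem \ref{thm:general-classification} over $S'$. The map $C$ is an $\Aut_G$-equivariant isomorphism compatible with base change, so $C(\delta\cdot(\M_{S'},\iota))=\delta\cdot c_{S'}$. Injectivity of $C$ on isomorphism classes then gives
\[
(\M_{S'},\iota)\cong\delta\cdot(\M_{S'},\iota)\iff \delta\cdot c_{S'}=c_{S'},
\]
which is exactly the condition $\delta\in\Aut_{G,c}(S')$. These identifications are functorial in $S'$, so the two subfunctors of $\Aut_G$ coincide. In particular $\Aut_\M$ is represented by the closed subgroup scheme $\Aut_{G,c}$. This is the stabilizer of a section of the étale (hence separated over $S$) scheme $\SheafCone^W(\underline{X}^G_\QQ)$, i.e.\ the fiber product of the orbit map $\Aut_G\to\SheafCone^W(\underline{X}^G_\QQ)$, $\delta\mapsto\delta\cdot c$, with the section $c$.

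The main obstacle is the bookkeeping in the third step: making sure the $\Aut_G$-action on $\Mon^\circ_G$ used in Theorem \ref{thm:general-classification} is the one given by $\delta\mapsto\iota\circ\delta^{-1}$, and that the sign and inverse conventions match. Since stabilizers are closed under inverses, a mismatch here is harmless. One should also double-check that $\Aut_\M$ means automorphisms of $\M$ as a monoid scheme, so that every automorphism restricts to the unit group $\M^\times$ and the monomorphism in the first step is well defined.
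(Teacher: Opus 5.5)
Your proposal is correct and is essentially the argument the paper intends. The paper states this corollary without proof as a consequence of Theorem \ref{thm:general-classification}, and your argument is the proof of Corollary \ref{cor:monoid-automorphism-field}, with the pinning-dependent splitting $\Aut_G\cong G_\ad\rtimes\Out_G$ replaced by the $\Aut_G$-equivariance and injectivity of the canonical map $C$; that replacement is what makes it work without a global pinning.

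One small correction to your side remark: \'etale does not imply separated. $\SheafCone^W(\underline{X}^G_\QQ)$ is nonetheless separated over $S$, because it is \'etale locally a disjoint union of copies of the base. Hence $c$ is an open and closed immersion, and $\Aut_{G,c}$ is in fact open and closed in $\Aut_G$.
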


The reductive group $G$ itself is of course a reductive $G$-monoid, its corresponding Weyl cone is the whole weight space $\underline{X}^G_\QQ$. We say that a monoid scheme is \emph{genuine} if it is not a group scheme. The following result gives a simple criterion for the existence of genuine reductive $G$-monoids. 
\begin{cor}\label{cor:genuine-monoid}
    Let $G$ be a reductive group over a nonempty scheme $S$. Then a genuine reductive $G$-monoid exists if and only if $\Hom_{S\textrm{-gp}}(\GG_m,Z_G)\ne0$.
\end{cor}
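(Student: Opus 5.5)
The plan is to compare global sections of $\SheafCone^W(\underline{X}^G_\QQ)$ with global sections of $(\underline{X}^{G,\vee})^{W_G}$, in both directions. The forward direction uses the morphism $\omega_G$ of \eqref{eq:omega_G}. The converse uses a morphism in the opposite direction, $\xi\mapsto\{\xi\ge0\}$. The bridge to the statement is a canonical isomorphism of \'etale sheaves
\[(\underline{X}^{G,\vee})^{W_G}\cong\SheafHom_{S\text{-gp}}(\GG_m,Z_G),\]
whose global sections are $\Hom_{S\text{-gp}}(\GG_m,Z_G)$. To construct it, I work \'etale locally where $G$ has a split Killing pair $(T,B)$, so that $\underline{X}^{G,\vee}\cong\SheafHom(\GG_m,T)$ by Lemma \ref{lem:Killing-pair-compatibility}.
\begin{itemize}
\item Let $\lambda$ be a cocharacter of $T$ fixed by $W$. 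For every root $\alpha$ we have $s_\alpha\lambda=\lambda-\langle\alpha,\lambda\rangle\alpha^\vee$, so $\langle\alpha,\lambda\rangle=0$. Hence $\lambda$ centralizes all root groups and $T$, so it factors through $Z_G$.
\item Conversely, a cocharacter of $Z_G$ lands in $T$ and is centralized by $N_G(T)$, hence is $W$-fixed.
\item This identification is compatible with $G_\ad$-conjugation of Killing pairs, so it glues to the global isomorphism above.
\end{itemize}
I expect this step to be the main technical point: making the identification canonical and checking it descends. Because both sides are \'etale locally constant, this can be checked on geometric fibers.

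\emph{Forward direction.} Let $\M$ be a genuine reductive $G$-monoid and choose a rigidification $\iota$. Let $c=C(\M,\iota)$ be the corresponding section from Theorem \ref{thm:general-classification}.
\begin{itemize}
\item The group $G$ corresponds to the constant section with value the whole weight space $\underline{X}^G_\QQ$. Since $\M\not\cong G$, the bijection of Theorem \ref{thm:general-classification} shows that $c$ is not this section.
\item Hence there is a geometric point $\Bar{s}$ at which $c_{\Bar{s}}\neq\underline{X}^G_{\QQ,\Bar{s}}$.
\item By the first property in Definition \ref{def:canonical-generator-Weyl-cone}, $\omega_G(c)_{\Bar{s}}\neq0$. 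Therefore the global section $\omega_G(c)$ of $(\underline{X}^{G,\vee})^{W_G}$ is a nonzero element of $\Hom_{S\text{-gp}}(\GG_m,Z_G)$.
\end{itemize}

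\emph{Converse.} On each constant type stratum, consider the map $(X^\vee)^W\to\Cone^W(X_\QQ)$ given by $\xi\mapsto\{x\in X_\QQ\mid \xi(x)\ge0\}$.
\begin{itemize}
\item This cone is full dimensional, and it is $W$-stable because $\xi$ is $W$-invariant. It equals $X_\QQ$ exactly when $\xi=0$.
\item The map is $\Aut(\Psi)$-equivariant. Twisting by the torsor $\SheafIsom^{\Out}_S(G_0,G)$ and gluing over the strata gives a morphism of $S$-schemes $(\underline{X}^{G,\vee})^{W_G}\to\SheafCone^W(\underline{X}^G_\QQ)$.
\end{itemize}
Now take a nonzero $\lambda\in\Hom_{S\text{-gp}}(\GG_m,Z_G)$. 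Since the sheaf is \'etale locally constant, $\lambda$ is nonzero at some geometric point $\Bar{s}$. Its image $c$ is a global section of $\SheafCone^W(\underline{X}^G_\QQ)$, and Theorem \ref{thm:general-classification} produces a rigidified monoid $(\M,\iota)$ with $C(\M,\iota)=c$.

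To see that $\M$ is genuine, look at the fiber at $\Bar{s}$. There the cone $c_{\Bar{s}}$ is a proper subset of the weight space. By Proposition \ref{prop:absolute-characterization}, $\M_{\Bar{s}}$ is then not the group $G_{\Bar{s}}$, since the group corresponds to the whole space and the classification is bijective. Therefore $\M\ne G$, and $\M$ is a genuine reductive $G$-monoid.
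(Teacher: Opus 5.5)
Your proposal is correct and follows essentially the same route as the paper. The forward direction applies $\omega_G$ to the cone section of a genuine monoid, and the converse uses the half-space section $\{\eta\ge 0\}$ together with Theorem \ref{thm:general-classification}. The one addition is that you justify the identification $\Gamma(S,(\underline{X}^{G,\vee})^{W_G})=\Hom_{S\text{-gp}}(\GG_m,Z_G)$ via the reflection formula and the root-group argument, whereas the paper uses this identification without comment.
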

\begin{proof}
    Suppose that there exists a nontrivial homomorphism of $S$-group schemes $\eta\colon\GG_m\to Z_G$. Then $\eta$ defines a nonzero global section of the sheaf of coweights $\underline{X}^{G,\vee}$. The condition $\eta\ge0$ defines a global section of $\SheafCone^W(\underline{X}^G_\QQ)$ different from the section of whole weight spaces and hence corresponds to a genuine reductive $G$-monoid by Theorem \ref{thm:general-classification}. More precisely, the global section defines a locally constant family of cones 
    \[C_{\eta,\Bar{s}}=\{\chi\in\underline{X}^G_{\QQ,\Bar{s}}\mid \langle\chi,\eta_{\Bar{s}}\rangle\ge0\},\qquad\forall\Bar{s}\in S.\]
    It is the whole space $\underline{X}^G_{\QQ,\Bar{s}}$ if $\eta_{\Bar{s}}=0$ and a half space otherwise. Since $\eta$ is not constantly zero by assumption, the corresponding rigidified $G$-monoid is genuine. \par 
    Conversely, let $(\M,\iota)$ be a genuine rigidified reductive $G$-monoid corresponding, under Theorem \ref{thm:general-classification}, to the section
    \[c=C(\M,\iota)\in\Gamma(S,\SheafCone^W(\underline{X}_\QQ^G)).\]
    Consider its image under the map \eqref{eq:omega_G}:
    \[\xi\coloneqq\omega_G(c)\in\Gamma(S,(\underline{X}^{G,\vee})^{W_G})=\Hom_{S\text{-gp}}(\GG_m,Z_G).\]
    Let $\Bar{s}\in S$ be a geometric point such that $\M_{\Bar{s}}$ is a genuine reductive $G_{\Bar{s}}$-monoid. Then the fiber $c_{\Bar{s}}$ of the cone section $c$ is not the whole space $\underline{X}^G_{\QQ,\Bar{s}}$. Moreover, the base change $\omega_{G,\Bar{s}}$ is given by the map $\omega$ in Definition \ref{def:Weyl-cone}, from which we see that $\xi_{\Bar{s}}\ne0$ and hence $\xi\ne0$.
\end{proof}

Let us mention another consequence of Theorem \ref{thm:general-classification}. It says that given a rigidified reductive monoid on the generic point of a normal integral scheme $S$, to extend it to a reductive monoid over $S$ it suffices to extend its unit group to a reductive group scheme over $S$. 

\begin{cor}\label{cor:integral-normal-base}
    Let $S$ be an integral normal scheme with generic point $\eta\in S$ and let $G$ be a reductive group scheme over $S$. Let $K=\kappa(\eta)$ be the residue field at $\eta$, i.e. the rational function field of $S$. Then taking generic fiber induces an equivalence of thin categories:
    \[\Mon_G^\bullet(S)\simeq\Mon^\bullet_{G_\eta}(K).\]
\end{cor}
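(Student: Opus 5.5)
By Theorem \ref{thm:general-classification}, applied over $S$ and over $\Spec(K)$, and by its compatibility with the base change $\eta\colon\Spec(K)\to S$, the statement is equivalent to the following. Restriction to the generic point gives an isomorphism of posets
\[\Gamma\bigl(S,\SheafCone^W(\underline{X}^G_\QQ)\bigr)\xrightarrow{\sim}\Gamma\bigl(\Spec(K),\SheafCone^W(\underline{X}^G_\QQ)_\eta\bigr).\]
Both categories are thin, so it is enough to prove this map is bijective and that it reflects the order. Write $\mathcal{C}\coloneqq\SheafCone^W(\underline{X}^G_\QQ)$. Since $S$ is connected, the type of $G$ is constant, so $\mathcal{C}$ is the twist $\SheafIsom^{\Out}_S(G_0,G)\times^{\Out_{G_0}}\Cone^W(X_\QQ)$. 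This is an étale-locally constant scheme over $S$: a disjoint union of schemes étale over $S$. The problem is therefore to show that, on a normal integral base, sections of such a scheme are determined by, and extend uniquely from, their generic value.

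\emph{Injectivity and order.} Let $c_1,c_2$ be two sections of $\mathcal{C}$. The locus where $c_1=c_2$, and likewise the locus where $c_1\le c_2$, can be tested étale locally on a cover where $\mathcal{C}$ becomes constant, where these conditions become equalities or inequalities of locally constant functions. Hence both loci are open and closed in $S$. Since $S$ is integral, hence connected, such a locus is all of $S$ as soon as it contains $\eta$. This gives injectivity and shows the order is reflected; faithfulness is automatic.

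\emph{Surjectivity.} This is the key step. A $K$-point of $\mathcal{C}$ lies in a single connected component $Z$ of $\mathcal{C}$. I would first check that $Z\to S$ is finite étale. The $\Out_{G_0}$-torsor $\SheafIsom^{\Out}_S(G_0,G)$ reduces étale locally, and as in SGA3 XXIV globally, to a finite étale $\underline{\Aut(\Psi)}$-torsor $P$, where $\Psi$ is the based root datum. Then $\mathcal{C}=P\times^{\Aut(\Psi)}\Cone^W(X_\QQ)$, and this splits as a disjoint union of the finite étale schemes $P\times^{\Aut(\Psi)}O$ over the $\Aut(\Psi)$-orbits $O\subset\Cone^W(X_\QQ)$. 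Even if $\Aut(\Psi)$ is infinite, it acts through a finite quotient on each orbit of cones, because the stabilizer of a cone stabilizes its finite set of canonical facet normals (Lemma \ref{lem:Weyl-cone}), which spans $X^\vee_\QQ$ modulo the part where $W$ acts trivially. I expect making this finiteness precise to be the main obstacle. If it fails, I would instead use the $K$-point itself: it lies in a finite étale piece cut out by the $\Aut(\Psi)$-stabilizer structure of the generic cone.

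Granting that $Z\to S$ is finite étale, $Z$ is normal, and the section $\Spec(K)\to Z_\eta$ picks out a component of $Z_\eta$ mapping isomorphically to $\Spec(K)$. Its closure $Z'$ in $Z$ is an integral normal scheme, finite over $S$, with function field $K$. Since $S$ is normal, $Z'$ is contained in the normalization of $S$ in $K$, which is $S$ itself. Hence $Z'\to S$ is an isomorphism, and its inverse is a section of $\mathcal{C}$ extending the given generic section. Finally, the order comparison above makes the resulting functor full, which completes the equivalence.
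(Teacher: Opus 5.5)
Your reduction to global sections of $\mathcal{C}=\SheafCone^W(\underline{X}^G_\QQ)$ is fine, and so is the open-and-closed-locus argument for injectivity and reflection of the order. The closing normalization argument is also fine; it is essentially the standard proof of the surjectivity $\mathrm{Gal}(K^{\mathrm{sep}}/K)\to\pi_1(S,\bar\eta)$ that the paper invokes.

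\textbf{The gap is the finiteness of $Z\to S$.} You justify it by claiming that $\Aut(\Psi)$ acts through a finite quotient on each orbit of Weyl cones. This is false as soon as $\Aut(\Psi)$ is infinite. Take a split torus of rank $2$: then $W=1$ and $\Aut(\Psi)=\GL_2(\ZZ)$. With $u=\bigl(\begin{smallmatrix}1&1\\0&1\end{smallmatrix}\bigr)$, the cones $u^n(\QQ_{\ge0}^2)$, spanned by $(1,0)$ and $(n,1)$, are pairwise distinct, so $P\times^{\Aut(\Psi)}O$ has infinite fibres. Controlling the stabilizer through the canonical facet normals cannot rescue this: here the stabilizer of the quadrant has order $2$ while its orbit is infinite.

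More fundamentally, your finiteness argument never uses normality, and finiteness really does fail without it. Over a nodal cubic, twist $\GG_m^2$ by the $\ZZ$-torsor given by the infinite chain of $\mathbb{P}^1$'s, via $n\mapsto u^n$; this is the standard non-isotrivial torus. Its scheme of Weyl cones has an infinite connected component through the quadrant. The quadrant is a section over the generic point, where the torsor is trivial, but it does not extend over $S$. Your fallback sentence does not identify anything that would exclude this.

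\textbf{The missing input is isotriviality.} Because $S$ is normal and integral, $\underline{X}^G$ (equivalently the torsor $\SheafIsom^{\Out}_S(G_0,G)$) is split by a finite \'etale cover; this is the result from Conrad's notes that the paper cites. Hence the monodromy action on $\Cone^W(X_\QQ)$ factors through a finite quotient of $\pi_1(S,\bar\eta)$. Its orbits are then finite, and every connected component of $\mathcal{C}$ is finite \'etale over $S$. With this in place your argument becomes the paper's proof written out. The paper phrases the last step as $\pi_1(S,\bar\eta)$-invariance of cones together with the surjectivity of $\mathrm{Gal}(K^{\mathrm{sep}}/K)\to\pi_1(S,\bar\eta)$.

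Alternatively, you can bypass finiteness and make your ``use the $K$-point itself'' idea precise:
\begin{itemize}
\item $\mathcal{C}$ is \'etale over a normal scheme, hence normal with locally finitely many irreducible components. So the closure $Z'$ of your generic point is open and closed in $\mathcal{C}$.
\item Zariski's Main Theorem makes the separated, \'etale, birational morphism $Z'\to S$ an open immersion.
\item Surjectivity then follows by testing on integral \'etale neighbourhoods over which $\mathcal{C}$ becomes constant.
\end{itemize}
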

\begin{proof}
    Choose an algebraic closure $\overline{K}$ of $K$ and let $\Bar{\eta}=\Spec(\overline{K})$ be the corresponding geometric point of $S$ above the generic point $\eta$. Let $K^{\mathrm{sep}}\subset\overline{K}$ be the separable closure of $K$. Let $X_\QQ$ be the fiber of $\underline{X}_\QQ^G$ at $\Bar{\eta}$. \par 
    Since $S$ is normal and integral, the \'etale sheaf $\underline{X}^G$ is isotrivial, i.e. splits over a finite \'etale cover of $S$ by \cite[proof of Corollary B.3.6]{ConradReductiveGroupScheme}. Hence the same holds for the sheaf $\SheafCone^W(\underline{X}_\QQ^G)$. In particular, it can be described by the \'etale fundamental group $\pi_1(S,\Bar{\eta})$ and we have
    \[\Gamma(S_{\mathrm{\acute{e}t}},\SheafCone^W(\underline{X}_\QQ^G))=\{C\in\Cone^W(X_\QQ)\mid \sigma(C)=C,\forall\sigma\in\pi_1(S,\Bar{\eta})\}\]
    By \cite[\href{https://stacks.math.columbia.edu/tag/0BQM}{Proposition 0BQM}]{Stacks} the canonical map 
    \[\mathrm{Gal}(K^{\mathrm{sep}}/K)=\pi_1(\eta,\Bar{\eta})\to\pi_1(S,\Bar{\eta})\]
    is surjective. Therefore the restriction map to the generic fiber is an isomorphism of posets
    \[\Gamma(S_{\mathrm{\acute{e}t}},\SheafCone^W(\underline{X}_\QQ^G))\xrightarrow{\simeq}\Gamma(\eta_{\mathrm{\acute{e}t}},\SheafCone^W(\underline{X}_\QQ^G))\]
    and both are identified with the poset $\Cone^W(X_\QQ)^{\mathrm{Gal}(K^{\mathrm{sep}}/K)}$. Then we conclude by Theorem \ref{thm:general-classification}. 
\end{proof}

\section{Some Basic Examples}\label{sec:example}
\subsection{Central simple algebras}
Let $n$ be a positive integer and let $A$ be a central simple algebra of dimension $n^2$ over a field $k$. Its unit group $G=A^\times$ is an inner form of $G_0=\GL_n$ over $k$ and its multiplicative monoid $\M_A$ is a reductive $G$-monoid over $k$. The abstract Cartan $T$, the absolute based root datum $(X,X^\vee,\Delta,\Delta^\vee)$, and the abstract Weyl group $W$ of $G$ are isomorphic to those of its split inner form $\GL_n$. So we have $T=(\GG_m)^n$, $X=\ZZ^n$, $\Delta=\{\alpha_i=e_i-e_{i+1},1\le i\le n-1\}$ and $W=S_n$ acting by permuting coordinates on $X$ etc. Here the $e_i$ (for $1\le i\le n$) are the standard basis elements for $X=\ZZ^n$. The Weyl cone for $\M_A$ is 
\[C(\M_A)=\bigoplus_{1\le i\le n}\QQ_{\ge0}\cdot e_i=(\QQ_{\ge 0})^n\subset\QQ^n.\]
The outer automorphism group is the constant group scheme $\Out_G=\Out_{G_0}=\ZZ/2\ZZ$. The nontrivial outer automorphism $\sigma$ acts on $X=\ZZ^n$ by $(x_1,\dotsc,x_n)\mapsto(-x_n,-x_{n-1},\dotsc,-x_1)$ and hence $\sigma(\alpha_i)=\alpha_{n-i}$ for all $1\le i\le n-1$. In particular, we have $\sigma(C(\M_A))=-C(\M_A)$. According to \cite[Example 7.1.12]{ConradReductiveGroupScheme}\footnote{Note that in \emph{loc. cit.} the groups are inner forms of $\mathrm{SL}_n$, but the same arguments apply in our situation of inner forms of $\GL_n$, with the extra bonus that we do not need to exclude the $n=2$ case: although $\Out_{\mathrm{SL}_2}=1$ we still have $\Out_{\mathrm{GL}_2}=\ZZ/2\ZZ$.}, the natural map $\Aut_G(k)\to\Out_G(k)=\ZZ/2\ZZ$ is surjective if and only if the class of $A$ in the Brauer group $\mathrm{Br}(k)$ is $2$-torsion, i.e. $A\cong A^{op}$. So in these cases, the cones $C(\M_A)$ and $-C(\M_A)$ form a single $\Aut_G(k)$-orbit and correspond to the isomorphism class of the monoid $\M_A$. Two cones in the orbit correspond to distinct isomorphism classes of rigidified reductive $G$-monoids whose underlying monoids are isomorphic. On the other hand, if the class of $A$ is not $2$-torsion, equivalently the algebras $A$ and $A^{op}$ are not isomorphic, then the $\Aut_G(k)$-orbit of $C(\M_A)$ consists of itself while the opposite cone $-C(\M_\A)$ corresponds to $\M_{A^{op}}$, the multiplicative monoid of $A^{op}$, together with the rigidification $A^\times\to (A^{op})^\times$, $a\mapsto a^{-1}$.

\subsection{The Vinberg monoid}\label{sec:Vinberg-monoid}
Let $G$ be a reductive group scheme over a nonempty scheme $S$, with abstract Cartan $T_G$. It comes with an embedding $Z_G\into T_G$ and a canonical isomorphism $T_G/Z_G\cong T_{G_\ad}$, the abstract Cartan for the adjoint group $G_\ad$ (see Definition \ref{def:abstract-Cartan}). Define the \emph{enhanced group of $G$} to be the following reductive group scheme over $S$:
\[G_\enh\coloneqq (T_G\times G)/Z_G\]
with abstract Cartan
\[T_{G_\enh}=(T_G\times T_G)/Z_G\]
where the quotients are by the anti-diagonal action of $Z_G$. Note that the center of $G_{\enh}$ is isomorphic to the abstract Cartan of $G$:
\[Z_{G_\enh}=(T_G\times Z_G)/Z_G\cong T_G.\]
We will define the universal enveloping monoid (or Vinberg monoid) for $G$, which is a reductive $G_\enh$-monoid, generalizing the construction of \cite[\S5.2, \S5.3]{CJ} in the split case. For this we may assume that $G$ (and hence $G_\enh$) is of constant type.\par 
Let $G_0$ be the split form of $G$. The sheaf of based root data for $G_0$ is constant with fiber $(X,X^\vee,\Delta,\Delta^\vee)$ at a chosen geometric point $\Bar{s}\in S$. Recall that the sheaf of based root data for $G$ is described by
\[(\underline{X}^G,\underline{X}^{G,\vee},\underline{\Delta}^G,\underline{\Delta}^{G,\vee})=\SheafIsom_S^{\Out}(G_0,G)\times^{\Out_{G_0}}(X,X^\vee,\Delta,\Delta^\vee).\]
Next we consider similar objects defined for the enhanced group $G_\enh$. The split form of $G_\enh$, together with its abstract Cartan, are given by 
\[G_{0,\enh}=(T_{G_0}\times G_0)/Z_{G_0},\qquad T_{G_{0,\enh}}=(T_{G_0}\times T_{G_0})/Z_{G_0}\]
where $T_{G_0}$ is the abstract Cartan of $G_0$ and $Z_{G_0}$ embeds anti-diagonally.\par 
The sheaf of based root data for $G_{0,\enh}$ is constant and its fiber at $\Bar{s}$ is $(X_{\enh},X_{\enh}^\vee,\Delta_\enh,\Delta_\enh^\vee)$ where
\[X_\enh\coloneqq\{(\mu,\la)\in X\oplus X\mid \mu|_{Z_{G_0}}=\la|_{Z_{G_0}}\},\quad\Delta_\enh=\{(0,\alpha)\in X_\enh\mid \alpha\in\Delta\}.\]
The abstract Weyl group of $G_{0,\enh}$ coincides with that of $G_0$ and we let $W$ be its fiber at $\Bar{s}\in S$. Then $W$ acts on $X_\enh$ through the second factor. We let $\Out_{G_0}$ act \emph{diagonally} on the two factors of $X_\enh$. Then the sheaf of based root data for $G_\enh$ is the \'etale locally constant sheaf on $S$ described by 
\begin{align*}
    &(\underline{X}^{G_\enh},\underline{X}^{G_\enh,\vee},\underline{\Delta}^{G_\enh},\underline{\Delta}^{G_\enh,\vee})\\ &=\SheafIsom_S^{\Out}(G_0,G)\times^{\Out_{G_0}}(X_\enh,X_\enh^\vee,\Delta_\enh,\Delta_\enh^\vee).
\end{align*}
Let $\QQ_{\ge0}[\Delta]\subset X_\QQ$ be the cone generated by the simple roots and consider the following convex polyhedral cone in $X_{\enh,\QQ}$:
\[C_{\Vin}\coloneqq \left\{(\mu,\la)\in X_{\enh,\QQ}\subset X_\QQ\oplus X_\QQ\mid \mu-w(\la)\in\QQ_{\ge0}[\Delta]\text{ for all }w\in W\right\}.\]
The definition exhibits $C_\Vin$ as the intersection of finitely many half spaces bounded by rational hyperplanes and hence it is a rational convex polyhedral cone in $X_{\enh,\QQ}$. It is also preserved by the $W$-action (which acts on the second factor). Moreover, the projection to the second $X_\QQ$-factor induces a surjection $C_\Vin\epic X_\QQ$ whose fibers contain some translates of the cone $\QQ_{\ge0}[\Delta]$. This shows that $C_\Vin$ spans $X_{\enh,\QQ}$ as a $\QQ$-vector space and we conclude that $C_\Vin\in\Cone^W(X_{\enh,\QQ})$. Moreover it is stable under the diagonal $\Out_{G_0}$ action and therefore gives rise to a global section
\[C_{\Vin}\in\Gamma(S,\SheafCone^W(\underline{X}^{G_\enh}_\QQ)).\]
The \emph{Vinberg monoid} for $G$, denoted by $\Vin_G$, is defined to be the reductive $G_\enh$-monoid corresponding to the $\Aut_{G_\enh}(S)$-orbit of $C_\Vin$ under the bijection in Theorem \ref{thm:general-classification}. The cone $C_\Vin$ also gives rise to a rigidification $\iota\colon G_\enh\cong(\Vin_G)^\times$. 
\begin{ex}
    Let $G=\mathrm{SU}_{2,\RR}$ be the special unitary group associated to the positive definite Hermitian form on $\mathbb{C}^2$ of signature $(2,0)$. Equivalently $G$ can also be defined as the norm-one subgroup of the multiplicative group of the Hamiltonian quaternion algebra $\mathbb{H}$. It is an inner form of $G_0=\mathrm{SL}_{2,\RR}$ and so the abstract Cartan is $T_G=T_{G_0}=\GG_m$. The enhanced group 
    \[G_\enh=(\GG_{m,\RR}\times\mathrm{SU}_{2,\RR})/\mu_2\] 
    is also the unit group of $\mathbb{H}$, and it is an inner form of $G_{0,\enh}=\GL_{2,\RR}$. The Vinberg monoid for $G_0$ is $\mathrm{Mat}_{2,\RR}$ while the Vinberg monoid for $G$ is the multiplicative monoid of $\mathbb{H}$. 
\end{ex}

%\bibliographystyle{alpha}
%\bibliography{Vinberg}
\end{document}